\numberwithin{equation}{section}
\newtheorem{theorem}{Theorem}[section]
\newtheorem{corollary}[theorem]{Corollary}
\newtheorem{proposition}[theorem]{Proposition}
\newtheorem{lemma}[theorem]{Lemma}
\theoremstyle{definition}
\newtheorem{algorithm}{Algorithm}[section]
\newtheorem{subroutine}[algorithm]{Subroutine}
\newtheorem{remark}{Remark}[section]
\newtheorem{definition}{Definition}[section]
\newtheorem{assumption}{Assumption}
\newcommand{\ie}{{\em i.e.}, }
\newcommand{\eg}{{\em e.g.}, }
\newcommand{\etal}{{\em et al.\ }}
\newcommand{\nn}{\mathbb{N}} 
\newcommand{\real}{\mathbb{R}} 
\newcommand{\comp}{\mathbb{C}} 
\newcommand{\norm}[1]{\left\Vert {#1} \right\Vert} 
\newcommand{\erl}{\left(-\infty , +\infty\right]} 
\newcommand{\dom}[1]{\mathrm{dom}\,{#1}} 
\DeclareMathOperator*{\argmin}{arg\,min}
\newcommand{\dist}{\mathrm{dist}} 
\newcommand{\act}[1]{\left\langle {#1} \right\rangle} 
\newcommand{\CCC}{\mathcal{C}}
\newcommand{\FFF}{\mathcal{F}} 
\newcommand{\MMM}{\mathcal{M}}
\newcommand{\Ibb}{\mathbb{I}}
\newcommand{\iF}{\mathcal{F}^{-1}} 
\newcommand{\bz}{{\bf z}}
\title{Proximal Heterogeneous Block Input-Output Method and application to Blind Ptychographic Diffraction Imaging}
\author{Robert Hesse\thanks{Institut f\"ur Numerische und Angewandte Mathematik,
Universit\"at G\"ottingen,\ Lotzestr.~16--18, 37083 G\"ottingen, Germany. E-mail: \texttt{hesse@math.uni-goettingen.de}.} \and 
D.\ Russell Luke\thanks{Institut f\"ur Numerische und Angewandte Mathematik,
Universit\"at G\"ottingen,\ Lotzestr.~16--18, 37083 G\"ottingen, Germany. E-mail: \texttt{r.luke@math.uni-goettingen.de}.} \and 
Shoham Sabach\thanks{Institut f\"ur Numerische und Angewandte Mathematik,
Universit\"at G\"ottingen,\ Lotzestr.~16--18, 37083 G\"ottingen, Germany. E-mail: \texttt{s.sabach@math.uni-goettingen.de}.}
\and 
Matthew K. Tam\thanks{CARMA Centre, University of Newcastle, Callaghan, NSW 2308, Australia. 
E-mail: \texttt{matthew.tam@uon.edu.au}.}}
\date{\today}
\begin{document}
\maketitle

\begin{abstract}
	We propose a general alternating minimization algorithm for nonconvex optimization 
	problems with separable structure and nonconvex coupling between blocks of variables. To 
	fix our ideas, we apply the methodology to the problem of blind ptychographic imaging.
	Compared to other schemes in the literature, our approach differs in two ways: (i) it is 
	posed within a clear mathematical framework with practically verifiable assumptions, and 
	(ii) under the given assumptions, it is provably convergent to critical points. A 
	numerical comparison of our proposed algorithm with the current state-of-the-art on 
	simulated and experimental data validates our approach and points toward directions for 
	further improvement. 
	
 \paragraph{Keywords:} Alternating minimization, deconvolution, Kurdyka-{\L}ojasiewicz, 
 nonconvex-nonsmooth minimization, ptychography.
\end{abstract}

\section{Introduction}
	We consider algorithms for nonconvex constrained optimization problems of the following 
	form	
	\begin{equation} \label{LeastSquaresFormulation}
		\mbox{ Find } \left(\overline{x} , \overline{y} , \overline{z}\right) \in 
		\argmin\left\{ F\left(x , y , z\right)~|~\left(x , y , z\right) \in C \equiv X \times 
		Y \times Z \right\}.
	\end{equation}
	Here $X \times Y \times Z \subset \real^{p} \times \real^{q} \times \real^{r}$ (that is,  
	the constraints apply to disjoint blocks of variables) and $F$ is a nonlinear penalty 
	function characterizing the coupling between the blocks of variables. It will be 
	convenient to reformulate problem \eqref{LeastSquaresFormulation} using indicator 
	functions. The indicator function of a set $C$ is defined as $\iota_{C}\left(x\right) = 
	0$ for $x \in C$ and $\iota_{C}\left(x\right) = +\infty$ for $x \notin C$. Define 
	\begin{equation} \label{e:Psi}
		\Psi\left(x , y , z\right) \equiv F\left(x , y , z\right) + \iota_{X}\left(x\right) + 
		\iota_{Y}\left(y\right) + \iota_{Z}\left(z\right).   
	\end{equation}
	An equivalent formulation of \eqref{LeastSquaresFormulation} is the formally 
	unconstrained nonsmooth optimization problem
	\begin{equation} \label{LeastSquaresFormulation-Psi}
		\mbox{ Find } \left(\overline{x} , \overline{y} , \overline{z}\right) \in 
		\argmin_{\left(x , y , z\right) \in \real^{p} \times \real^{q} \times \real^{r}} 
		\left\{ \Psi\left(x , y , z\right) \right\}.
	\end{equation}
	Algorithms for solving \eqref{LeastSquaresFormulation} or 
	\eqref{LeastSquaresFormulation-Psi} typically seek only to satisfy first-order necessary 
	conditions for optimality, and the algorithm we propose below is no different. These 
	conditions are given compactly by 
	\begin{equation}\label{e:Fermat}
   		0 \in \nabla F\left(x^{*} , y^{*} , z^{*}\right) + 
   		\partial\iota_{X}\left(x^{*}\right) + \partial\iota_{Y}\left(y^{*}\right) + 
   		\partial\iota_{Z}\left(z^{*}\right),
	\end{equation}
	where $\partial f\left(z\right)$ is a set, the {\em subdifferential}, that generalizes 
	the notion of a gradient for nonsmooth, subdifferentially regular functions $f$ defined 
	precisely in Definition \ref{d:subdifferential} below.  
\medskip

	For the sake of fixing the ideas, we focus on the particular application of {\em blind 
	ptychography}, however our goal and approach are much more general. The partially smooth 
	character of the objective $\Psi$ in \eqref{e:Psi} is a common feature in many 
	optimization models which involve sums of functions, some of which are smooth and some of 
	which are not. Forward-backward-type algorithms are frequently applied to such models, 
	and our approach is no different. The particular three-block structure of the problem is 
	easily generalized to $M$ blocks. The crucial feature of the model for algorithms, and 
	what we hope to highlight in the present study, is the quantification of continuity of 
	the partial gradients of $F$ with respect only to blocks of variables. This is in 
	contrast to more classical approaches which rely on the continuity of $\nabla F$ with 
	respect to {\em all the variables} simultaneously (see \cite{ABS2013}). For the 
	ptychography application, such a requirement prohibits a convergence analysis along the 
	lines of \cite{ABS2013} since the gradient $\nabla F$ is {\em not} Lipschitz continuous. 
	However, the partial gradients with respect to the blocks of variables {\em are} 
	Lipschitz continuous. Following \cite{BST2013}, this allows us to prove, in 
	Section~\ref{Sec:Analysis}, convergence of the blocked algorithm given below 
	(Algorithm~\ref{a:PBIE}) to feasible critical points.  
\medskip

	Our abstract formulation of the blind ptychography problem can be applied to many 
	different applications including control, machine learning, and deconvolution. We do not 
	attempt to provide a review of the many different approaches to these types of problems, 
	or even a more focused review of numerical methods for ptychography, but rather to 
	provide a common theoretical framework by which a variety of methods can be understood. 
	Our focus on ptychography is due to the success of two algorithms, one by Maiden and 
	Rodenburg \cite{MR2009} and the other due to Thibault and collaborators \cite{TDBMP2009}.  
	These two touchstone methods represent, for us, fundamental computational methods whose 
	structure serves as a central bifurcation in numerical strategies.  Moreover, the 
	prevalence of these two methods in practice ensures that our theoretical framework will 
	have the greatest practical impact.  (Which is not to say that the methods are the most  
	efficient \cite{YangMarchesini, SECCMS}.) We present an algorithmic framework in Section 
	\ref{Sec:formulation} by which these algorithms can be understood and analyzed. We 
	present in Section~\ref{Sec:Analysis} a theory of convergence of the most general 
	Algorithm \ref{a:PBIE} which is refined with increasingly stringent assumptions until it 
	achieves the form of Algorithm~\ref{a:PPHeBIE} that can be immediately applied to 
	ptychography. The specialization of our algorithm to ptychography is presented in Section 
	\ref{Sec:AppPyt} and summarized in Algorithm~\ref{a:PPHeBIE ptych}. We compare, in 
	Section \ref{Sec:Implementing}, Algorithm~\ref{a:PPHeBIE ptych} with the state-of-the-art 
	on simulated and experimental data.

\section{Algorithms and Modeling}\label{Sec:formulation}
	The solution we seek is a triple, $(\overline{x},\overline{y},\overline{z})$ that 
	satisfies \emph{a priori} constraints, denoted by $C$, as well as a model characterizing 
	the coupling between the variables. We begin naively with a very intuitive idea for 
	solving \eqref{LeastSquaresFormulation}: alternating minimization (AM) with respect to 
	the three separate blocks of variables $x$, $y$ and $z$. More precisely, starting with 
	any $\left(x^{0} , y^{0} , z^{0}\right) \in X \times Y \times Z$, we consider the 
	following algorithm:
	\begin{subequations} \label{a:AM}
   		\begin{align}
			x^{k + 1} & \in \argmin_{x \in X} \left\{ F\left(x , y^{k} , z^{k}\right) 
			\right\}, \label{e:AM1} \\
			y^{k + 1} & \in \argmin_{y \in Y} \left\{ F\left(x^{k + 1} , y , z^{k}\right) 
			\right\}, \label{e:AM2} \\
			z^{k + 1} & \in \argmin_{z \in Z} \left\{ F\left(x^{k + 1} , y^{k + 1} , z\right) 
			\right\}. \label{e:AM3}
		\end{align}
	\end{subequations}
	\noindent While the simplicity of the above algorithm is attractive, there are several 
	considerations one must address:
	\begin{itemize}
    	\item[$\rm{(i)}$] The convergence results for the AM method are limited and 
    		applicable only in the convex setting. It is unknown if the AM method converges 
    		in the nonconvex setting. Of course, in the general nonconvex setting we can not 
    		expect for convergence to global optimum but even convergence to critical points 
    		is not known. In \cite{ABRS2010} the authors prove convergence to critical points 
    		for a regularized variant of alternating minimization.  We follow this approach 
    		in Algorithm~\ref{a:PBIE} below, applying proximal regularization in each of the 
    		steps to obtain provable convergence results. 
    	\item[$\rm{(ii)}$] Each one of the steps of the algorithm involves solving an 
    		optimization problem over just one of the blocks of variables. 
    		Forward-backward-type methods are common approaches to solving such minimization 
    		problems \cite{CW2005}, and can be mixed between blocks of 
    		variables \cite{BST2013}. Forward operators are typically applied to the 
    		ill-posed or otherwise computationally difficult parts of the objective (usually 
    		appearing within the smooth part of the objective) while backward operators are 
    		applied to the well-posed parts of the objective (appearing often in the 
    		nonsmooth part).  
	\end{itemize}
	In the particular case of ptychography, the subproblems with respect to the $x$ and $y$ 
	variables (steps \eqref{e:AM1} and \eqref{e:AM2} of the algorithm) are ill-posed. We 
	handle this by applying a regularized backward operator (the prox operator) to 
    blockwise-linearizations of the ill-posed forward steps. The objective is well-posed, and 
    particularly simple, with respect to the third block of variables, $z$, so we need only 
    employ a backward operator for this step. Generalizing, our approach addresses issue (ii) 
    above by handling each of the blocks of variables differently. 
\medskip

	Our presentation of Algorithm~\ref{a:PBIE} makes use of the following notation. For any 
	fixed $y \in \real^{q} $ and $z \in \real^{r}$, the function $x \mapsto F\left(x , y , 
	z\right)$ is continuously differentiable and its partial gradient, $\nabla_{x} F\left(x , 
	y , z\right)$, is Lipschitz continuous with moduli $L_{x}\left(y , z\right)$. The same 
	assumption holds for the function $y \mapsto F\left(x , y , z\right)$ when $x \in 
	\real^{p}$ and $z \in \real^{r}$ are fixed. In this case, the Lipschitz moduli is denoted 
	by $L_{y}\left(x , z\right)$. Define $L_{x}'\left(y , z\right) \equiv \max \left\{ 
	L_{x}\left(y , z\right) , \eta_{x} \right\}$ where $\eta_{x}$ is an arbitrary positive 
	number. Similarly define $L_{y}'\left(x , z\right) \equiv \max \left\{ L_{y}\left(x , 
	z\right) , \eta_{y} \right\}$ where $\eta_{y}$ is an arbitrary positive number.  
\vspace{0.2in}

    \fcolorbox{black}{Ivory2}{\parbox{17cm}{
	\begin{algorithm}[{\bf Proximal Block Implicit-Explicit Algorithm}] \label{a:PBIE}$~$\\
    {\bf Initialization.} Choose $\alpha , \beta > 1$, $\gamma > 0$ and   
     $\left(x^{0} , y^{0} , z^{0}\right) \in X \times Y \times Z$. \\
    {\bf General Step ($k = 0 , 1 , \ldots$)}
	\begin{itemize}
		\item[1.] Set $\alpha^{k} = \alpha L_{x}'\left(y^{k} , z^{k}\right)$ and select
			\begin{equation} \label{Algo:Step1}
				x^{k + 1} \in \argmin_{x \in X} \left\{ \act{x - x^{k} , \nabla_{x} 
				F\left(x^{k} , y^{k} , z^{k}\right)} + \frac{\alpha^{k}}{2}\norm{x - 
				x^{k}}^{2} \right\}, 
			\end{equation}
		\item[2.] Set $\beta^{k} = \beta L_{y}'\left(x^{k  + 1} , z^{k}\right)$ and select
			\begin{equation} \label{Algo:Step2}
				y^{k + 1} \in \argmin_{y \in Y} \left\{ \act{y - y^{k} , \nabla_{y} 
				F\left(x^{k + 1} , y^{k} , z^{k}\right)} + \frac{\beta^{k}}{2}\norm{y - 
				y^{k}}^{2} \right\},
			\end{equation}
		\item[3.] Select
			\begin{equation} \label{Algo:Step3}
				z^{k + 1} \in \argmin_{z \in Z} \left\{ F\left(x^{k + 1} , y^{k + 1} , 
				z\right) + \frac{\gamma}{2}\norm{z - z^{k}}^{2} \right\}.
			\end{equation}
	\end{itemize}
	\end{algorithm}}}
\vspace{0.2in}

	The regularization parameters $\alpha^{k}$ and $\beta^{k}$, $k \in \nn$, are discussed in 
	Section~\ref{Sec:Implementing}. For the moment, suffice it to say that these parameters 
	are inversely proportional to the stepsize in Steps~\eqref{Algo:Step1} and 
	\eqref{Algo:Step2} of the algorithm (see Section~\ref{Sec:Analysis}). Noting that 
	$\alpha_{k}$ and $\beta_{k}$, $k \in \nn$, are directly proportional to the respective 
	partial Lipschitz moduli, the larger the partial Lipschitz moduli the {\em smaller} the 
	stepsize, and hence the slower the algorithm progresses.  
\medskip

	This brings to light another advantage of blocking strategies that goes beyond 
	convergence proofs: algorithms that exploit block structures inherent in the objective 
	function achieve better numerical performance by taking heterogeneous step sizes 
	optimized for the separate blocks. There is, however, a price to be paid in the blocking 
	strategies that we explore here: namely, they result in procedures that pass {\em 
	sequentially} between operations on the blocks, and as such are not immediately 
	parallelizable. Here too, the ptychography application generously rewards us with added 
	structure, as we show in Section~\ref{SSec:Acceleration}, permitting parallel 
	computations on highly segmented blocks.      
\medskip

	The convergence theory developed in Section~\ref{Sec:Analysis} is independent of the 
	precise form of the coupling function $F$ and independent of the precise form of the 
	constraints. For our analysis we require that $F$ is differentiable with $\nabla F$ 
	Lipschitz continuous on bounded domains, and the partial gradient $\nabla_{x} F$  
	(globally) Lipschitz continuous as a mapping on $X$ for each $\left(y , z\right) \in Y 
	\times Z$ fixed, and partial gradient $\nabla_{y} F$ (globally) Lipschitz continuous as a 
	mapping on $Y$ for $\left(x , z\right) \in X \times Z$ fixed. The constraint sets $X$, 
	$Y$ and $Z$ could be very general, we only assume that they are closed and disjoint. This 
	is discussed more precisely below. The analysis presented in Section~\ref{Sec:Analysis} 
	guarantees only that Algorithm~\ref{a:PBIE} converges to a point satisfying 
	\eqref{e:Fermat}, which, it is worth reiterating, are not necessarily solutions to 
	\eqref{LeastSquaresFormulation}.  

\subsection{Blind Ptychography} \label{SSec:Pyt}
	In scanning ptychography, an unknown specimen is illuminated by a localized 
	electromagnetic beam and the resulting wave is recorded on a CCD array somewhere 
	downstream along the axis of propagation of the wave (\ie in the {\em far field} or the 
	{\em near field} of the object). A ptychographic dataset consists of a series of such 
	observations, each differing from the others by a spatial shift of either the object or 
	the illumination. In the original ptychographic reconstruction procedure \cite{Hoppe} it 
	was assumed that the illuminating beam was known. What we call {\em blind} ptychography, 
	in analogy with blind deconvolution, reflects the fact that the beam is not completely 
	known, this corresponds to what is commonly understood by ptychography in modern 
	applications \cite{RodenburgBates, Rodenburg07, MR2009, TDBMP2009}. Here the problem is 
	to simultaneously reconstruct the specimen and illuminating beam from a given 
	ptychgraphic dataset. We will treat the case of scanning x-ray ptychography with far 
	field measurements. This is not exhaustive of all the different settings one might 
	encounter, but the mathematical structure of the problem, our principal interest, is 
	qualitatively the same for all cases. For a review of ptychothographic methods, see 
	\cite{R2008} and the reference herein.
\medskip
 
 	We formulate the ptychography problem on the product space $\comp^{n} \times \comp^{n} 
	\times \comp^{n \times m}$ where the first block $\comp^{n}$ corresponds to the model 
	space for the probe, the second block corresponds to the model space for the specimen, 
	and the third block corresponds to the model space for the data/observations. The 
	physical model space equipped with the real inner product is isomorphic to the Euclidean 
	space $\left(\real^{2}\right)^{n} \times \left(\real^{2}\right)^{n} \times 
	\left(\real^{2}\right)^{n \times m}$ with the inner product $\act{\left(x , y , 
	z\right) , \left(x' , y', z'\right)} \equiv \sum_{j = 1}^{n} \act{x_{j} , x'_{j}} + 
	\sum_{j = 1}^{n} \act{y_{j} , y'_{j}} + \sum_{j = 1}^{n}\sum_{i = 1}^m \act{z_{ij} , 
	z'_{ij}}$ for $x_{i} , y_{i} , z_{ij} \in \real^{2}$.  This is in fact how complex 
	numbers are represented on a computer, and hence the model space $\comp^{n}$ with real 
	inner product is just an efficient shorthand for $\left(\real^{2}\right)^{n}$ with the 
	standard inner product for such product spaces.  We will therefore retain the complex 
	model space with real inner product when describing this problem, noting that all linear 
	operators on this space have analogues on the space $\left(\real^{2}\right)^{n}$.  The 
	theory, however, will be set on real finite dimensional vector spaces.  
\medskip

	Denote $\bz \equiv \left(z_{1} , z_{2} , \ldots , z_{m}\right)$ with $z_{j} \in 
	\comp^{n}$ ($j = 1 , 2 , \ldots , m$).  The objective function in our general 
	optimization problem \eqref{LeastSquaresFormulation}, $F : \comp^{n} \times \comp^{n} 
	\times \comp^{n \times m} \rightarrow \real_{+}$, is given by 
	\begin{equation} \label{e:F}
		F\left(x , y , \bz\right) \equiv \sum_{j = 1}^{m} \norm{S_{j}\left(x\right) \odot y - 
		z_{j}}^{2}.
	\end{equation}
	Here $S_{j} : \comp^{n} \to \comp^{n}$ denotes $j$-th shift operator which shifts the 
	indexes in $x \in \comp^{n}$ in some prescribed fashion and $\odot$ is the elementwise 
	Haadamard product. This function measures in some sense the distance to the set 
	\begin{equation}\label{EquationFormulation}
   		\MMM \equiv \left\{ \left(x , y , \bz\right) \in \comp^{n} \times \comp^{n} \times 
   		\comp^{n \times m} ~|~ S_{j}\left(x\right) \odot y = z_{j}, \,\, j = 1 , 2 , \ldots 
   		, m\right\}.
	\end{equation}

	Let $b_{j} \in \real^{n}$, $j = 1 , 2 , \ldots , m$, denote the experimental 
	observations, and $\FFF$ be a $2$D discrete Fourier transform (of dimension $\sqrt{n} 
	\times \sqrt{n}$) rearranged for vectors on $\comp^{n}$. The constraints $X$, $Y$, and 
	$Z$ are separable and given by 
	\begin{subequations}\label{e:C}
 		\begin{align} \label{XConstraint} 
 			X & \equiv \{\mbox{qualitative constraints on the probe}\}, \\
			Y & \equiv \{\mbox{qualitative constraints on the specimen}\}, \\
			Z & \equiv Z_{1} \times Z_{2} \times \cdots \times Z_{m}, \nonumber \\
			  & \mbox{where } Z_{j} \equiv \left\{ z \in \comp^{n}~|~\left| \left(\FFF 
			  z\right)_{l} \right| = b_{jl}, \; (l = 1 , 2 , \ldots , n) \right\} \quad (j = 
			  1 , 2 , \ldots , m). \label{e:Mj}
		\end{align}
	\end{subequations}
	The qualitative constraints characterized by $X$ and $Y$ are typically a mixture of 
	support, support-nonnegativity or magnitude constraints corresponding respectively to 
	whether the illumination and specimen are supported on a bounded set, whether these (most 
	likely only the specimen) are ``real objects'' that somehow absorb or attenuate the probe 
	energy, or whether these are ``phase'' objects with a prescribed intensity but varying 
	phase. A support constraint for the set $X$, for instance, would be represented by 
	\begin{equation} \label{e:support}
		X \equiv \left\{ x = (x_1,x_2,\dots,x_n) \in \comp^{n}~|~|x_{i}| \leq R~ (i=1,2,
		\dots,n)\mbox{ and, for } i \notin \Ibb_{X},~ x_{i} = 0 \right\},
	\end{equation}
	where $\Ibb_{X}$ is the index set corresponding to which pixels in the field of view the 
	probe beam illuminates and $R$ is some given amplitude. A mixture of support and 
	amplitude constraints for the set $Y$ 
	would be represented by 
	\begin{equation} \label{e:support-amplitude}
		Y \equiv \left\{ y = (y_1,y_2,\dots,y_n) \in \comp^{n}~|~0 \leq \underline{\eta} \leq \left|y_{i}\right| \leq 
		\overline{\eta}\mbox{ and,  for } i \notin \Ibb_{Y},  y_{i} = 0 \right\},
	\end{equation}
	where the index set $\Ibb_{Y}$ is the analogous index set for the support of the 
	specimen, and $\underline{\eta}/\overline{\eta}$ are lower/upper bounds on the intensity 
	of the specimen. The set $Z$ is nothing more than the {\em phase set} appearing in 
	feasibility formulations of the phase retrieval problem \cite{LBL2002}.  
	\begin{remark}[Feasibility versus minmization]
		Since the algorithms discussed below involve, at some point, projections onto these 
		sets, it is worthwhile noting here that, while, in most applications, the projections 
		onto the sets $X$, $Y$ and $Z$ have a closed form and can be computed very accurately 
		and efficiently, we are unaware of any method, analytic or otherwise, for computing 
		the projection onto the set $\MMM$ defined by \eqref{EquationFormulation}. For this 
		reason, we have avoided formulation of the problem as a (nonconvex) {\em feasibility} 
		problem 
		\begin{equation*}
   			\mbox{Find }\overline{x} \in \MMM \cap \left(X \times Y \times Z \right). 
		\end{equation*}
		Nevertheless, this essentially two-set feasibility model suggests a wide range of 
		techniques within the family of projection methods, alternating projections, averaged 
		projections and Douglas--Rachford being representative members. In contrast to these, 
		our approach is essentially a {\em forward-backward} method that avoids the 
		difficulty of computing a projection onto the set $\MMM$ by instead minimizing a 
		nonnegative coupling function $F$ that takes the value $0$ (only) on $\MMM$. 
	\end{remark}

\section{Algorithm Analysis} \label{Sec:Analysis}
\subsection{Mathematical Preliminaries} \label{SSec:Preliminaries}
	Algorithm~\ref{a:PBIE} consists of three steps all of which reduce to the computation of 
	a \emph{projection} onto a given constraint set (convex and nonconvex). Recall that the 
	\emph{projection} onto a nonempty and closed subset $\Omega$ of a Euclidean space 
	$\real^{d}$, is the (set-valued) mapping $P_{\Omega} : \real^{d} \rightrightarrows 
	\Omega$ defined by
    \begin{equation} \label{D:OrthogonalProjection}
        P_{\Omega}\left(v\right) \equiv \argmin \left\{ \norm{u - v}~|~u \in \Omega \right\}.
    \end{equation}
    In Euclidean spaces the projection is single valued if and only if $\Omega$ is convex (in 
    addition to being nonempty and closed). Specializing to the present application, for the 
    constraints specified by \eqref{e:C}, $P_{C} = \left(P_{X} , P_{Y} , P_{Z}\right)$ where 
    $P_{X}$, $P_{Y}$ and $P_{Z}$ are, in general, multivalued (consider 
    $P_{C}\left(0\right)$). 
\medskip

	Since we are dealing with nonsmooth and nonconvex functions that can take the value 
	$+\infty$, we require the following generalization of the derivative for nonconvex 
	functions.
	\begin{definition}[Subdifferential \cite{RW1998-B}]\label{d:subdifferential}
		Let $f : \real^{d} \rightarrow \erl$ be proper (not everywhere infinite) and lower 
		semicontinuous (lsc).    
		\begin{itemize}
   			\item The {\em regular or Fr\'echet subdifferential} of $f$ at $u \in 
   				\dom f$, denoted $\widehat{\partial}f(u)$, is the set of vectors $v \in 
   				\real^{d}$ which satisfy
				\begin{equation} \label{e:rsd}
   					\liminf_{\stackrel{w \neq u}{w \rightarrow u}}\frac{f\left(w\right) - 
   					f\left(u\right) - \act{v , w - u}}{\norm{w - u}} \geq 0.
				\end{equation}
				If $u \notin \dom f$ then $\widehat{\partial}f\left(u\right) \equiv 
				\emptyset$. 
			\item The {\em limiting subdifferential} of $f$ at $u \in \dom f$, 
				denoted $\partial f\left(u\right)$, is the set of limits of limiting 
				subdifferentials: 
				\begin{equation*} 
					\partial f\left(u\right) \equiv \left\{ v \in \real^{d} ~|~ \exists u^{k} 
					\rightarrow u \mbox{ with } f\left(u^{k}\right) \rightarrow 									
					 f\left(u\right) \mbox{ and } v^{k} \rightarrow v \mbox{ with } v^{k} \in 
					\widehat{\partial} f\left(u^{k}\right) \mbox{ as } k \rightarrow \infty 
					\right\}.
				\end{equation*}
		\end{itemize}
		We say that $f$ is {\em subdifferentially regular} at $u$ if $\partial 		
		f\left(u\right) = \widehat{\partial} f\left(u\right)$, and {\em subdifferentially 
		regular} (without reference to the point $u$) if it is subdifferentially regular at 
		every point in $\dom f$.   
	\end{definition}
    The notion of regularity of a set can be understood in terms of the subdifferential 
    regularity of the indicator function of that set. We will call a set {\em Clarke regular} 
    if the corresponding indicator function is subdifferentially regular (see 
    \cite[Definition 6.4]{RW1998-B}). 
\medskip
    
    In \cite{BST2013}, Bolte \etal present a general procedure for determining convergence to 
    critical points of generic algorithms for nonsmooth and nonconvex problems. The procedure 
    consists of verifying three criteria, two of which are quite standard and shared by most 
    \textit{descent} algorithms, see \eg \cite{ABS2013}. The third criterion depends not on 
    the algorithm but on the objective function: it must satisfy the {\em 
    Kurdyka-{\L}ojasiewicz (KL) property} (see \cite{BDL2006, BDL2007} and the references 
    therein). 
	\begin{definition}[Kurdyka-{\L}ojasiewicz property]\label{d:KL}
		Let $f : \real^{d} \rightarrow \erl$ be proper and lower semicontinuous. For $\eta 
		\in \left(0 , +\infty\right]$ define
		\begin{equation}
   			\CCC_{\eta} \equiv \left\{ \varphi \in C\left[\left[0 , \eta\right) ,
   			\real_{+}\right] \mbox{ such that } 
   			\left.\begin{cases}
			   	\varphi\left(0\right) = 0 & \\ 
			   	\varphi \in C^{1} & \mbox{ on } \left(0 ,\eta\right) \\
				\varphi'\left(s\right) > 0 & \mbox{ for all } s \in \left(0 , \eta\right)
			\end{cases} \right\} \right\}.
		\end{equation}
		The function $f$ is said to have the Kurdyka-{\L}ojasiewicz (KL) property at 
		$\overline{u} \in \dom \partial f$ if there exist $\eta \in \left(0 , 
		+\infty\right]$, a neighborhood $U$ of $\overline{u}$ and a function $\varphi \in 
		\CCC_{\eta}$, such that, for all 
		\begin{equation*}
		 u \in U \cap [f(\overline{u}) < f(u) < f(\overline{u}) + \eta],
		\end{equation*}
		the following inequality holds
		\begin{equation}\label{e:KL}
   			\varphi\left(f\left(u\right) - f\left(\overline{u}\right)\right)\dist\left(0 , 
   			\partial f\left(u\right)\right) \geq 1.
		\end{equation}
		If $f$ satisfies property~\eqref{e:KL} at each point of $\dom \partial f$, then 
		$f$ is called a {\em KL function}.
	\end{definition}
	For a given function, the KL property can be verified indirectly by checking membership 
	to certain classes of functions, in particular the class of {\em semi-algebraic} 
	functions \cite{BDL2006}. For the convenience of the reader, we recall here the 
	definition of semi-algebraic functions.
    \begin{definition}[Semi-algebraic sets and functions]\hspace*{\fill}
        \begin{itemize}
            \item[$\rm{(i)}$] A $S\subseteq\real^{d}$ is \emph{(real) semi-algebraic} if 
            	there exists a finite number of real polynomial functions $p_{ij} , q_{ij} : 
            	\real^{d} \rightarrow \real$ such that
                \begin{equation*}
                    S = \bigcup_{j = 1}^{N} \bigcap_{i = 1}^{K} \left\{ u \in \real^{d} : 
                    \; p_{ij}\left(u\right) = 0 \, \text{ and } \, q_{ij}\left(u\right) < 
                    0 \right\}.
                \end{equation*}
            \item[$\rm{(ii)}$] A function $f : \real^{d} \rightarrow \erl$ is \emph{semi-
            	algebraic} if its graph
                \begin{equation*}
                    \left\{ \left(u , t\right) \in \real^{d + 1}~|~f\left(u\right) = t 
                    \right\},
                \end{equation*}
                is a semi-algebraic subset of $\real^{d + 1}$.
        \end{itemize}
    \end{definition}
    The class of semi-algebraic sets is stable under the following operations: finite unions, 
    finite intersections, complementation and Cartesian products. For a thorough catalog of 
    semi-algebraic functions and sets see \cite{AB2009, ABRS2010, ABS2013, BST2013} and the 
    references therein.
\medskip

	While it may not be obvious how {\em directly} to verify the KL property it is easy to 
	determine whether a function is semi-algebraic. The remarkable fact about semi-algebraic 
	functions is that, as long as they are lower semi-continuous, they automatically satisfy 
	the KL property on their domain as stated in the following result. 
	\begin{theorem}[{\cite[Th.~3.3, pg.~1215]{BDL2006}}] \label{P:Semi-aKL}
        Let $f : \real^{d} \rightarrow \erl$ be a proper lower semicontinuous function. 
        If $f$ is semi-algebraic then it satisfies the KL property at any point in $\dom{f}$.
    \end{theorem}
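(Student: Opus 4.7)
The plan is to reduce the KL property to the nonsmooth \emph{semi-algebraic {\L}ojasiewicz inequality}, from which a desingularizing function $\varphi$ can be read off by an explicit formula.

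First I would argue that the graph of the limiting subdifferential $\gr \partial f$ is semi-algebraic whenever $f$ is. Indeed, the Fr\'echet subdifferential $\widehat{\partial} f(u)$ is definable by a first-order formula in the language $(\real, +, \cdot, <)$ applied to the (semi-algebraic) graph of $f$, and so $\gr \widehat{\partial} f$ is semi-algebraic by the Tarski--Seidenberg projection theorem; the limiting subdifferential is then obtained by a sequential closure operation, which preserves the semi-algebraic class. Consequently the map $u \mapsto \dist(0, \partial f(u))$ is semi-algebraic on $\dom \partial f$.

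Second, I would establish the nonsmooth {\L}ojasiewicz inequality at an arbitrary $\overline{u} \in \dom \partial f$: there exist $\eta > 0$, a neighborhood $U$ of $\overline{u}$, a constant $c > 0$, and an exponent $\theta \in [0,1)$ such that
\begin{equation*}
   (f(u) - f(\overline{u}))^{\theta} \leq c \, \dist(0, \partial f(u))
\end{equation*}
whenever $u \in U$ and $f(\overline{u}) < f(u) < f(\overline{u}) + \eta$. The argument proceeds by contradiction via the semi-algebraic curve selection lemma: if no such pair $(c,\theta)$ existed one could extract a semi-algebraic arc $\gamma : (0, \epsilon) \to \real^{d}$ with $\gamma(t) \to \overline{u}$ along which both $f \circ \gamma$ and $\dist(0, \partial f(\gamma(\cdot)))$ admit Puiseux expansions in the parameter $t$. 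Matching the leading exponents via the monotonicity theorem for one-variable semi-algebraic functions then forces the existence of the claimed $\theta \in [0,1)$ and a compatible constant $c$, contradicting the assumed failure.

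Third, with this inequality in hand, I would set $\varphi(s) \equiv \kappa s^{1-\theta}$ for a suitable $\kappa > 0$ depending on $c$ and $\theta$. One checks directly that $\varphi(0) = 0$, $\varphi$ is $C^{1}$ on $(0, \eta)$, and $\varphi'(s) > 0$, so $\varphi \in \CCC_{\eta}$; the desingularization inequality \eqref{e:KL} is then an algebraic rearrangement of the {\L}ojasiewicz estimate above. For points in $\dom f \setminus \dom \partial f$ the KL property is either outside the scope of the statement or vacuously satisfied under the convention $\dist(0, \emptyset) = +\infty$. The principal obstacle is the second step: the nonsmooth semi-algebraic {\L}ojasiewicz inequality is a substantial result in its own right, requiring the curve selection lemma together with a careful justification that a measurable semi-algebraic selection of subgradients is available along the selected arc. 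This is the heart of the argument of Bolte, Daniilidis, and Lewis in \cite{BDL2006}, which we would invoke here rather than reprove in full.
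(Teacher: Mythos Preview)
The paper does not supply its own proof of this theorem: it is quoted verbatim as a known result from \cite{BDL2006} and used as a black box. There is therefore nothing in the paper to compare your argument against. Your sketch is a reasonable outline of the original Bolte--Daniilidis--Lewis proof (semi-algebraicity of $\gr\partial f$ via Tarski--Seidenberg, a nonsmooth {\L}ojasiewicz inequality obtained through curve selection and Puiseux expansions, and then $\varphi(s)=\kappa s^{1-\theta}$), and you correctly flag that the second step is the substantive one you would ultimately cite rather than reprove.
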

	We now show that the ptychography problem, and hence the phase retrieval problem, is 
	semi-algebraic, \ie both the objective function $F$ and the constraint set $C$ are 
	semi-algebraic.
	\begin{proposition}[Blind ptychography and phase retrieval are semi-algebraic] 
	\label{t:Ptychography semialg}
		The objective function $F$, defined by \eqref{e:F}, is continuous and semi-algebraic. 
		The constraint sets $X$, $Y$ and $Z$, defined by \eqref{e:C}, are nonempty, closed 
		and semi-algebraic. Consequently, the corresponding function $\Psi$, defined by 
		\eqref{e:Psi}, is a KL function on $X\times Y\times Z$.
	\end{proposition}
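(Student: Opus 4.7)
The plan is to verify each clause of the proposition directly from the polynomial/algebraic structure of the data, then invoke Theorem~\ref{P:Semi-aKL}. The whole argument reduces to exhibiting polynomial descriptions (in the real and imaginary parts of the variables) of $F$, $X$, $Y$, $Z$, and then appealing to the standard stability properties of the semi-algebraic class (finite unions, intersections, complements, Cartesian products, and sums of semi-algebraic functions).

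First I would dispense with $F$. Writing $x = u + iv$, $y = s + it$, $z_j = a_j + ib_j$ in terms of real and imaginary components, the shift operators $S_j$ act as real linear permutations on $(u,v)$, the Hadamard product is a bilinear operation in the real components, and $\|\cdot\|^2$ is a sum of squares. Hence $F$ is a real polynomial in the variables $(u,v,s,t,a_j,b_j)$, so in particular continuous and semi-algebraic (its graph is cut out by the single polynomial equation $F(x,y,\bz) - r = 0$).

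Next I would treat the sets. For $X$ as in \eqref{e:support}, the conditions $|x_i|^2 \leq R^2$ and $x_i = 0$ for $i \notin \Ibb_X$ are polynomial inequalities/equalities in $(\mathrm{Re}\,x_i, \mathrm{Im}\,x_i)$, exhibiting $X$ as a finite intersection of basic semi-algebraic sets; it is clearly nonempty (contains $0$) and closed. The set $Y$ in \eqref{e:support-amplitude} is handled identically, using $\underline{\eta}^2 \leq |y_i|^2 \leq \overline{\eta}^2$. For each $Z_j$ in \eqref{e:Mj}, since $\FFF$ is a linear (in the real sense) map, the equalities $|(\FFF z)_l|^2 = b_{jl}^2$ are real polynomial equations in the real components of $z$, so $Z_j$ is semi-algebraic and closed; nonemptiness follows from the fact that any vector of phases produces a preimage. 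Finally, $Z = Z_1 \times \cdots \times Z_m$ is semi-algebraic by stability under Cartesian products, and $X \times Y \times Z$ is nonempty and closed.

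To conclude, the indicator of a semi-algebraic set is a semi-algebraic function (its graph is a finite union of two semi-algebraic pieces corresponding to values $0$ and the point at infinity, which is handled within the extended-real-valued semi-algebraic framework; see the references \cite{AB2009,ABRS2010,ABS2013,BST2013}). Therefore $\Psi = F + \iota_X + \iota_Y + \iota_Z$ is a finite sum of semi-algebraic functions, hence semi-algebraic; it is proper since $X\times Y\times Z\neq\emptyset$ and $F$ is finite-valued, and lower semicontinuous since $F$ is continuous and $X$, $Y$, $Z$ are closed. Theorem~\ref{P:Semi-aKL} then yields that $\Psi$ is a KL function on its domain, which on the feasible side is precisely $X\times Y\times Z$.

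There is no single hard step; the only subtle point is ensuring that each elementary building block truly has a polynomial description over the real model space—in particular that the Fourier transform and the shifts contribute only real-linear (hence polynomial) components, and that the modulus constraints become quadratic polynomial equations after squaring. Once these translations are in place the conclusion is essentially a bookkeeping argument using stability of the semi-algebraic class under the relevant set- and function-theoretic operations.
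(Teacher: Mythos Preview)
Your proposal is correct and follows essentially the same route as the paper's own proof sketch: identify $\comp^n$ with $(\real^2)^n$, observe that $F$ is polynomial (hence continuous and semi-algebraic) in the real coordinates, check that the constraints $X$, $Y$, $Z$ are cut out by polynomial equalities/inequalities (support, modulus, and Fourier-amplitude constraints), and then conclude via Theorem~\ref{P:Semi-aKL}. If anything, your write-up is more explicit than the paper's sketch in spelling out why the shifts and the Fourier transform are real-linear and why the modulus constraints become quadratic after squaring, and in noting that $\Psi$ inherits semi-algebraicity as a sum of semi-algebraic functions.
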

	\begin{proof}[Proof sketch]
		The physical model is formulated with complex-valued vectors, but $\comp^{n}$ with 
		the real inner product is isomorphic to the Euclidean space 
		$\left(\real^{2}\right)^{n}$ with the inner product $\act{x , x'} \equiv \sum_{i = 
		1}^{n} \left(x_{i} , x'_{i}\right)$ for $x_{i} , x'_{i} \in \real^{2}$. The function 
		$F$ defined by \eqref{e:F} is finite everywhere, continuous (indeed, differentiable), 
		and the level sets of the objective $F$ are quadratics with respect to $y$ and 
		$z_{j}$, and quadratic with respect to $x$ under linear transformations. Thus $F$ is 
		semi-algebraic. The sets $X$ and $Y$ are either subspaces (support constraint only, 
		\eqref{e:support}) or the intersection of a subspace with a box or ball (support and 
		nonnegativity or support and amplitude constraints \eqref{e:support-amplitude}), and 
		so both of these are nonempty semi-algebraic. The set $Z$ is equivalent to an 
		amplitude constraint in the image space of the linear mapping $\FFF$ with respect to 
		the 1-norm on each two-dimensional component of the product space 
		$\left(\real^{2}\right)^{n}$. Thus $Z$ is also nonempty semi-algebraic.  That $\Psi$ 
		defined by \eqref{e:Psi} is then a KL-function for these $F$, $X$, $Y$, and $Z$ then 
		follows from Theorem \ref{P:Semi-aKL}.
	\end{proof}

\subsection{Convergence Analysis}\label{Sec:Convergence}
	Our convergence analysis is centered on Theorem~\ref{T:Convergence}, a general result
    concerning the application of Algorithm~\ref{a:PBIE} to problem \eqref{e:Fermat}. The 
    specialization to the ptychography problem, Proposition \ref{t:Ptychography hypotheses}, 
    is then easily achieved by verifying that the assumptions of a refinement, Theorem 
    \ref{T:Convergence gen parallel}, are satisfied. Following \cite{BST2013}, we carry out 
    the three-step procedure, outlined in Section~\ref{SSec:Preliminaries}, for proving 
	convergence of the sequence $\left\{\left(x^{k} , y^{k} , z^{k}\right) \right\}_{k \in 
	\nn}$, generated by Algorithm~\ref{a:PBIE}, to a point satisfying \eqref{e:Fermat} 
	provided that the initial point $\left(x^{0} , y^{0} , z^{0}\right) \in X \times Y \times 
	Z$. The analysis rests on the following assumptions, collected here to avoid repetition.
\vspace{0.2in}

    \fcolorbox{black}{white}{\parbox{17cm}{
		\begin{assumption} \label{hyp} 
			Let $\left\{ \left(x^{k} , y^{k} , z^{k}\right) \right\}_{k \in \nn}$ be iterates 
			of Algorithm~\ref{a:PBIE} for  $\left(x^{0} , y^{0} , z^{0}\right) \in X \times Y 
			\times Z$.
			\begin{enumerate}[(i)]
				\item \label{hyp:1} $X \subset \real^{p}$, $Y \subset \real^{q}$, and 
					$Z \subset \real^{r}$ are nonempty and closed.  
				\item \label{hyp:2} $F : \real^{p} \times \real^{q} \times \real^{r} 
					\rightarrow \real$ is differentiable on $X \times Y \times Z$ and 
					$\inf F> -\infty$. Moreover, $\nabla_{x} F$ and $\nabla_{y} F$ 
					(as defined above) are Lipschitz continuous with moduli 
					$L_{x}\left(y , z\right)$ and $L_{y}\left(x , z\right)$, respectively. 
				\item \label{hyp:3} The gradient of $F$, $\nabla F$, is Lipschitz continuous 
					on bounded domains in $X\times Y\times Z$. Moreover, there exists 
					$\lambda_{x}^{+} , \lambda_{y}^{+} > 0$ such that
					\begin{equation} \label{e:upper}
						\sup \left\{ L_{x}\left(y^{k} , z^{k}\right)~|~k \in \nn \right\} 
						\leq \lambda_{x}^{+} \quad \mbox{ and } \quad \sup \left\{ 				
						L_{y}\left(x^{k + 1} , z^{k}\right)~|~k \in \nn\right\} \leq 
						\lambda_{y}^{+}.
					\end{equation}
				\item \label{hyp:4} The iterates $\left\{\left(x^{k}, y^{k}, z^{k}\right) 
					\right\}_{k \in \nn}$ are bounded. 
				\item \label{hyp:5} The function $\Psi$ defined by \eqref{e:Psi} is a KL 
					function (see Definition \ref{d:KL}).
			\end{enumerate}
		\end{assumption}}}
\vspace{0.2in}	

	\begin{remark}\label{r:ptychography assumptions}
		In Section~\ref{Sec:AppPyt}, we will show that our ptychography model (described in 
		Section~\ref{SSec:Pyt}) satisfies these assumptions. For the general setting  we 
		point out the following. 
		\begin{enumerate}[(i)]
			\item\label{r:compact constraints}	Boundedness of the iterates 
				Assumption~\ref{hyp}\eqref{hyp:4}) is a strong assumption that can be    
				handled by a more technical treatment than we would like to present here. For 
				our purposes this can be guaranteed by the physically natural assumption that 
				the constraint set is bounded. Since Algorithm~\ref{a:PBIE} is a {\em 
				feasible point} algorithm, all iterates belong to the bounded feasible set, 
				hence, in this case, the iterates are bounded.
			\item Combining Assumptions~\ref{hyp}\eqref{hyp:2} and \eqref{hyp:3} do not 
				guarantee that the gradient $\nabla F$ is globally Lipschitz, as is the case 
				in the application described below (see Section \ref{Sec:AppPyt}). The 
				inequalities in \eqref{e:upper} could be obtained in several scenarios, for 
				example, when $F$ is $C^{2}$ and using the boundedness assumption 
				Assumption~\ref{hyp}\eqref{hyp:4}.
		\end{enumerate}
	\end{remark}
	We begin with a technical lemma.
    \begin{lemma}[Sufficient decrease property] 
    \label{L:Sufficent}
        Let $h : \real^{d} \rightarrow \real$ be a continuously differentiable function with 
        gradient $\nabla h$ assumed to be $L_{h}$-Lipschitz continuous and let $\Omega$ be a 
        nonempty and closed subset of $\real^{d}$. Fix any $t > L_{h}$. Then, for any $u \in 
        \Omega$ and for $u^{+} \in \real^{d}$ defined by
        \begin{equation*}
            u^{+} \in P_{\Omega}\left(u - \frac{1}{t}\nabla h\left(u\right)\right),
        \end{equation*}
        we have
        \begin{equation*}
            h\left(u^{+}\right) \leq h\left(u\right) - \frac{1}{2}\left(t - L_{h}\right) 
            \norm{u^{+} - u}^{2}.
        \end{equation*}
    \end{lemma}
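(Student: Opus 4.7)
The plan is to combine the standard descent lemma for functions with Lipschitz gradient together with the variational characterization of the projection $u^+$, which is where the extra factor $t$ comes in.

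First I would invoke the \emph{descent lemma}: since $\nabla h$ is $L_h$-Lipschitz on $\real^{d}$, for any two points $u,u^{+}\in\real^{d}$ one has
\begin{equation*}
h(u^{+}) \leq h(u) + \act{\nabla h(u),\, u^{+}-u} + \tfrac{L_h}{2}\norm{u^{+}-u}^{2}.
\end{equation*}
This takes care of the $\tfrac{L_h}{2}\norm{u^{+}-u}^{2}$ part of the target inequality; it remains to control the inner-product term by $-\tfrac{t}{2}\norm{u^{+}-u}^{2}$.

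Second I would use that $u^{+}\in P_{\Omega}(u-\tfrac{1}{t}\nabla h(u))$ means $u^{+}$ minimizes $v\mapsto \norm{v-(u-\tfrac{1}{t}\nabla h(u))}^{2}$ over $v\in\Omega$. Since $u\in\Omega$ is a feasible competitor, I get
\begin{equation*}
\norm{u^{+}-u+\tfrac{1}{t}\nabla h(u)}^{2} \leq \norm{\tfrac{1}{t}\nabla h(u)}^{2}.
\end{equation*}
Expanding the left-hand side and cancelling the common $\tfrac{1}{t^{2}}\norm{\nabla h(u)}^{2}$ yields
\begin{equation*}
\norm{u^{+}-u}^{2} + \tfrac{2}{t}\act{\nabla h(u),\,u^{+}-u} \leq 0,
\end{equation*}
i.e.\ $\act{\nabla h(u),\,u^{+}-u}\leq -\tfrac{t}{2}\norm{u^{+}-u}^{2}$.

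Third I would substitute this bound into the descent inequality, obtaining
\begin{equation*}
h(u^{+}) \leq h(u) - \tfrac{t}{2}\norm{u^{+}-u}^{2} + \tfrac{L_h}{2}\norm{u^{+}-u}^{2} = h(u) - \tfrac{1}{2}(t-L_h)\norm{u^{+}-u}^{2},
\end{equation*}
which is exactly the claim. There is no real obstacle here: both the descent lemma and the projection inequality are elementary, and the factor $t>L_h$ is precisely what ensures the coefficient $\tfrac{1}{2}(t-L_h)$ is positive so that the estimate is a genuine decrease. The only thing to be careful about is that $u^{+}$ is not required to lie in $\dom h$ beyond $\real^{d}$, so the descent lemma applies without qualification, and that $u\in\Omega$ is used only as a feasible comparison point in the projection step.
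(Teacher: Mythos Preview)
Your proof is correct. The paper does not actually prove this lemma but defers to \cite[Lemma~2]{BST2013} with $\sigma=\iota_{\Omega}$; your argument---descent lemma plus the variational characterization of $u^{+}$ as a projection with $u\in\Omega$ as competitor---is precisely the standard proof underlying that cited result, so you have simply unpacked what the paper leaves as a citation.
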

    \begin{proof}
    	The result follows from \cite[Lemma~2]{BST2013} where the nonsmooth 
    	function $\sigma$ is the indicator function $\iota_{\Omega}$ of the nonempty 
    	closed set $\Omega$.
    \end{proof}
    \begin{remark}
    	When $\Omega$ is also convex, the conclusion of Lemma~\ref{L:Sufficent} 
    	can be improved (see \cite[Lemma~2.3]{BT2009}) to the following
        \begin{equation*}
            h\left(u^{+}\right) \leq h\left(u\right) - \left(t - \frac{L_{h}}{2}\right) 
            \norm{u^{+} - u}^{2}.
        \end{equation*}
    This means that $t > L_{h}/2$ (rather than only $t > L_{h}$, as in the nonconvex case) is 
    enough to guarantee decrease of function value after projected-gradient step.
    \end{remark}
	Using Lemma~\ref{L:Sufficent} we can prove the following basic property of 
	Algorithm~\ref{a:PBIE}.
	\begin{proposition}[Sufficient decrease] \label{P:FiniteLength2}
		Let $\left\{ \left(x^{k} , y^{k} , z^{k}\right) \right\}_{k \in \nn}$ be a sequence 
		generated by Algorithm~\ref{a:PBIE} for some initial point $\left(x^{0} , y^{0} , 
		z^{0}\right) \in X \times Y \times Z$. Suppose that conditions 		
		\eqref{hyp:1}-\eqref{hyp:2} of Assumption~\ref{hyp} hold. Then the sequence 
		$\left\{ F\left(x^{k} , y^{k} , z^{k}\right) \right\}_{k \in \nn}$ is decreasing and 
		\begin{equation*}
			\sum_{k = 1}^{\infty} \norm{\left(x^{k + 1} , y^{k + 1} , z^{k + 1}\right) - 
			\left(x^{k} , y^{k} , z^{k}\right)}^{2} < \infty.
		\end{equation*}
		Hence the sequence $\left\{ F\left(x^{k} , y^{k} , z^{k}\right) \right\}_{k \in \nn}$ 
		converges to some $F^{\ast} > -\infty$ as $k \rightarrow \infty$. 
	\end{proposition}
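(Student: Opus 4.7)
The plan is to derive a one-step descent inequality of the form
\[
	F(x^{k+1},y^{k+1},z^{k+1}) \leq F(x^{k},y^{k},z^{k}) - \rho \, \bigl\| (x^{k+1},y^{k+1},z^{k+1}) - (x^{k},y^{k},z^{k}) \bigr\|^{2},
\]
for some constant $\rho > 0$ independent of $k$, and then to obtain both conclusions by monotonicity and telescoping against the lower bound $\inf F > -\infty$ from Assumption~\ref{hyp}\eqref{hyp:2}.

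To produce that inequality I would treat the three updates in order. Step~1 of Algorithm~\ref{a:PBIE} is exactly a projected-gradient step for the function $x \mapsto h(x) \equiv F(x, y^{k}, z^{k})$ on the closed set $X$ with stepsize $1/\alpha^{k}$: completing the square inside \eqref{Algo:Step1} shows $x^{k+1} \in P_{X}(x^{k} - (\alpha^{k})^{-1}\nabla_{x}F(x^{k},y^{k},z^{k}))$. Since $\nabla h$ is $L_{x}(y^{k},z^{k})$-Lipschitz and $\alpha^{k} = \alpha L_{x}'(y^{k},z^{k}) \geq \alpha L_{x}(y^{k},z^{k})$ with $\alpha > 1$, Lemma~\ref{L:Sufficent} (applied with $t = \alpha^{k}$ and $L_{h} = L_{x}(y^{k},z^{k})$) yields
\[
	F(x^{k+1}, y^{k}, z^{k}) \leq F(x^{k}, y^{k}, z^{k}) - \tfrac{1}{2}(\alpha - 1) L_{x}'(y^{k},z^{k}) \, \| x^{k+1} - x^{k} \|^{2} \leq F(x^{k}, y^{k}, z^{k}) - \tfrac{1}{2}(\alpha - 1)\eta_{x}\, \| x^{k+1} - x^{k} \|^{2},
\]
where I use the key lower bound $L_{x}'(y^{k},z^{k}) \geq \eta_{x}$ built into the definition. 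An identical argument applied to $y \mapsto F(x^{k+1}, y, z^{k})$ on $Y$ gives the same kind of decrease in $\|y^{k+1} - y^{k}\|^{2}$ with constant $\tfrac{1}{2}(\beta - 1)\eta_{y}$. Step~3 is easier because the update is an exact proximal minimization: taking $z = z^{k}$ as a test point in \eqref{Algo:Step3} gives immediately $F(x^{k+1}, y^{k+1}, z^{k+1}) + \tfrac{\gamma}{2}\|z^{k+1} - z^{k}\|^{2} \leq F(x^{k+1}, y^{k+1}, z^{k})$, so no Lipschitz constant is needed here at all.

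Chaining the three inequalities produces the sought descent with
\[
	\rho \equiv \tfrac{1}{2} \min \bigl\{ (\alpha - 1)\eta_{x},\ (\beta - 1)\eta_{y},\ \gamma \bigr\} > 0.
\]
In particular $\{F(x^{k},y^{k},z^{k})\}$ is monotonically nonincreasing and bounded below by $\inf F > -\infty$, hence converges to some $F^{\ast} > -\infty$. Summing the descent inequality from $k = 0$ up to $K$ telescopes the left side and gives
\[
	\rho \sum_{k=0}^{K} \bigl\| (x^{k+1}, y^{k+1}, z^{k+1}) - (x^{k}, y^{k}, z^{k}) \bigr\|^{2} \leq F(x^{0}, y^{0}, z^{0}) - F(x^{K+1}, y^{K+1}, z^{K+1}) \leq F(x^{0}, y^{0}, z^{0}) - \inf F,
\]
so letting $K \to \infty$ proves square-summability of the step lengths.

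I do not anticipate a serious obstacle: the only point requiring care is that in Steps~1 and~2 the Lipschitz modulus of the relevant partial gradient depends on the other blocks and could in principle degenerate to zero, which would spoil a uniform lower bound on $\rho$. This is precisely what the artificial truncation $L_{x}'(y,z) = \max\{L_{x}(y,z), \eta_{x}\}$ (and its $y$-analogue) in the algorithm's specification is designed to prevent, and it is the reason $\rho$ comes out bounded below by a positive constant independent of $k$ without appealing to the stronger items \eqref{hyp:3}--\eqref{hyp:5} of Assumption~\ref{hyp}.
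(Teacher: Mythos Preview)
Your proposal is correct and follows essentially the same route as the paper: apply Lemma~\ref{L:Sufficent} to the $x$- and $y$-updates (using $L_{x}' \geq \eta_{x}$ and $L_{y}' \geq \eta_{y}$ to get a uniform lower bound on the decrease coefficient), use the competitor $z = z^{k}$ in \eqref{Algo:Step3} for the $z$-update, chain the three inequalities, and telescope against $\inf F > -\infty$. Your constant $\rho$ coincides with the paper's $\lambda^{-}$, and your closing remark on the role of the truncation $L_{x}' = \max\{L_{x},\eta_{x}\}$ is exactly the point.
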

	\begin{proof}
		We apply Lemma \ref{L:Sufficent} to the first subproblem (see \eqref{Algo:Step1}) as 
		follows. Take $h\left(\cdot\right) = F\left(\cdot , y^{k} , z^{k}\right)$, $\Omega = 
		X$ and $t = \alpha^{k} > L_{x}'\left(y^{k} , z^{k}\right)$ to obtain that 
		\begin{align*}
			F\left(x^{k + 1} , y^{k} , z^{k}\right) & \leq F\left(x^{k} , y^{k} , z^{k} 
			\right) - \frac{1}{2}\left(\alpha^{k} - L_{x}\left(y^{k} , z^{k}\right)\right) 
			\norm{x^{k + 1} - x^{k}}^{2} \\
			& \leq F\left(x^{k} , y^{k} , z^{k} \right) - \frac{1}{2}\left(\alpha^{k} - 
			L_{x}'\left(y^{k} , z^{k}\right)\right)\norm{x^{k + 1} - x^{k}}^{2} \\
			& = F\left(x^{k} , y^{k} , z^{k} \right) - \frac{1}{2}\left(\alpha - 1\right) 
			L_{x}'\left(y^{k} , z^{k}\right)\norm{x^{k + 1} - x^{k}}^{2} \\
			& \leq F\left(x^{k} , y^{k} , z^{k} \right) - \frac{1}{2}\left(\alpha - 1\right) 
			\eta_{x}\norm{x^{k + 1} - x^{k}}^{2},	
		\end{align*}
		where the second inequality follows from the fact that $L_{x}\left(y^{k} , 
		z^{k}\right) \leq L_{x}'\left(y^{k} , z^{k}\right)$ and the last inequality follows 
		from the fact that $\eta_{x} \leq L_{x}'\left(y^{k} , z^{k}\right)$ and $\alpha > 1$. 
		Similarly, applying Lemma \ref{L:Sufficent} to the second subproblem (see 
		\eqref{Algo:Step2}) with $h\left(\cdot\right) = F\left(x^{k + 1} , \cdot , 
		z^{k}\right)$, $\Omega = Y$ and $t = \beta^{k} > L_{y}'\left(x^{k + 1} , 
		z^{k}\right)$ yields
		\begin{equation*}
			F\left(x^{k + 1} , y^{k + 1} , z^{k}\right) \leq F\left(x^{k + 1} , y^{k} , 
			z^{k}\right) - \frac{1}{2}\left(\beta - 1\right)\eta_{y}\norm{y^{k + 1} - 
			y^{k}}^{2}.
		\end{equation*}
		On the other hand, immediately from the third updating rule (see \eqref{Algo:Step3}) 
		we get that
		\begin{equation*}
			F\left(x^{k + 1} , y^{k + 1} , z^{k + 1}\right) \leq F\left(x^{k + 1} , y^{k + 1} 
			, z^{k}\right) - \frac{\gamma}{2}\norm{z^{k + 1} - z^{k}}^{2}.
		\end{equation*}
		Summing up all these inequalities yields
		\begin{align*}
			F\left(x^{k + 1} , y^{k + 1} , z^{k + 1}\right) & \leq F\left(x^{k} , y^{k} , 
			z^{k}\right) - \frac{1}{2}\left(\alpha - 1\right)\eta_{x}\norm{x^{k + 1} - 
			x^{k}}^{2} - \frac{1}{2}\left(\beta - 1\right)\eta_{y}\norm{y^{k + 1} - 
			y^{k}}^{2} \\
			& - \frac{\gamma}{2}\norm{z^{k + 1} - z^{k}}^{2}.
		\end{align*}
		Denote $\lambda^{-} \equiv \left(1/2\right)\min \left\{ \left(\alpha - 1\right) 
		\eta_{x} , \left(\beta - 1\right)\eta_{y} , \gamma \right\}$. Thus
		\begin{equation}
			F\left(x^{k + 1} , y^{k + 1} , z^{k + 1}\right) \leq F\left(x^{k} , y^{k} , 
			z^{k}\right) - \lambda^{-}\norm{\left(x^{k + 1} , y^{k + 1} , z^{k + 1}\right) - 
			\left(x^{k} , y^{k} , z^{k}\right)}^{2}. 		
		\end{equation}
		This proves that the sequence $\left\{ F\left(x^{k} , y^{k} , z^{k}\right) 
		\right\}_{k \in \nn}$ is decreasing. Since, in addition, we know that $F$ is bounded 
		from below (see Assumption \ref{hyp}\eqref{hyp:2}), we thus have a decreasing 
		sequence on a compact interval and it follows that $\left\{ F\left(x^{k} , y^{k} , 
		z^{k}\right) \right\}_{k \in \nn}$ converges to some $F^{\ast} > -\infty$. Summing up 
		this inequality, for $k = 1 , 2 , \ldots, N$, yields
		\begin{align} 
			\sum_{k = 1}^{N} \norm{\left(x^{k + 1} , y^{k + 1} , z^{k + 1}\right) - 
			\left(x^{k} , y^{k} , z^{k}\right)}^{2} & \leq \frac{1}{\lambda^{-}} 
			\left(F\left(x^{1} , y^{1} , z^{1}\right) - F\left(x^{N + 1} , y^{N + 1} , z^{N + 
			1}\right)\right) \nonumber\\
			& \leq \frac{F\left(x^{1} , y^{1} , z^{1}\right) - F^{\ast}}{\lambda^{-}},
			\label{P:FiniteLength2:1}
		\end{align}
		where the last inequality holds true since $F\left(x^{N + 1} , y^{N + 1} , z^{N + 
		1}\right) \geq F^{\ast}$. Taking the limit as $N \rightarrow \infty$ yields 
		boundedness of the sum of step-lengths and completes the proof.
	\end{proof}
	Before proving the second step, we obtain the following immediate consequence.
	\begin{corollary}[Rate of asymptotic regularity]
		Let $\left\{ \left(x^{k} , y^{k} , z^{k}\right) \right\}_{k \in \nn}$ be a sequence 
		generated by Algorithm~\ref{a:PBIE} for some initial point $\left(x^{0} , y^{0} , 
		z^{0}\right) \in X \times Y \times Z$ and define the corresponding sequence of steps 
		$\left\{ s^{k} \right\}_{k \in \nn \setminus\{0\}}$ by $s^{k + 1} \equiv \left(x^{k 
		+ 1} , y^{k + 1} , z^{k + 1}\right) - \left(x^{k} , y^{k} , z^{k}\right)$. Suppose 
		that conditions \eqref{hyp:1}-\eqref{hyp:2} of Assumption~\ref{hyp} hold. Then $s^{k} 
		\rightarrow 0$ as $k \rightarrow \infty$ with the following rate
   		\begin{equation*}
    			\min_{k = 1 , 2 , \ldots , N} \norm{s^{k + 1}} \leq \sqrt{\frac{F\left(x^{1} 
    			, y^{1} , z^{1}\right) - F^{\ast}}{N\lambda^{-}}},
    		\end{equation*}
    	where $\lambda^{-} \equiv \left(1/2\right)\min \left\{ \left(\alpha - 1\right) 
    	\eta_{x} , \left(\beta - 1\right)\eta_{y} , \gamma \right\}$ and $F^{\ast} \equiv 
    	\lim_{k \rightarrow \infty} F\left(x^{k} , y^{k} , z^{k}\right)$.
	\end{corollary}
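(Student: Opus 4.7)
The proof is essentially a repackaging of the inequality already derived inside the proof of Proposition~\ref{P:FiniteLength2}. The plan is to invoke the telescoped estimate \eqref{P:FiniteLength2:1} and then apply the elementary observation that the minimum of $N$ nonnegative numbers is dominated by their average.

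More concretely, first I would note that Assumption~\ref{hyp}\eqref{hyp:1}--\eqref{hyp:2} are exactly the hypotheses used to establish Proposition~\ref{P:FiniteLength2}, so the proposition applies and in particular gives, for every $N \in \nn$,
\begin{equation*}
\sum_{k = 1}^{N} \norm{s^{k+1}}^{2} \;\leq\; \frac{F\left(x^{1},y^{1},z^{1}\right) - F^{\ast}}{\lambda^{-}},
\end{equation*}
with $\lambda^{-} \equiv (1/2)\min\{(\alpha-1)\eta_{x},(\beta-1)\eta_{y},\gamma\}$ and $F^{\ast} = \lim_{k\to\infty} F(x^{k},y^{k},z^{k})$.

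Next, since each summand is nonnegative, the trivial bound $N\cdot\min_{1\leq k\leq N}\norm{s^{k+1}}^{2} \leq \sum_{k=1}^{N}\norm{s^{k+1}}^{2}$ yields
\begin{equation*}
\min_{k = 1,2,\ldots,N} \norm{s^{k+1}}^{2} \;\leq\; \frac{F\left(x^{1},y^{1},z^{1}\right) - F^{\ast}}{N\lambda^{-}},
\end{equation*}
and taking square roots produces the claimed rate. Finally, $s^{k} \to 0$ is a direct consequence of the summability $\sum_{k}\norm{s^{k+1}}^{2} < \infty$ already asserted in Proposition~\ref{P:FiniteLength2}, since the general term of a convergent series of nonnegative numbers must tend to zero.

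There is no real obstacle here: the work was done in Proposition~\ref{P:FiniteLength2}, and the corollary is a one-line consequence of the min-versus-average inequality. The only thing to be careful about is that the bound \eqref{P:FiniteLength2:1} is stated with $F(x^{N+1},y^{N+1},z^{N+1})$ on the right-hand side before being bounded by $F^{\ast}$; that replacement is legitimate precisely because $\{F(x^{k},y^{k},z^{k})\}_{k\in\nn}$ is decreasing and converges downwards to $F^{\ast}$, so $F(x^{N+1},y^{N+1},z^{N+1}) \geq F^{\ast}$ for all $N$.
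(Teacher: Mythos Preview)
Your proposal is correct and follows essentially the same approach as the paper's proof: invoke the telescoped estimate \eqref{P:FiniteLength2:1} from Proposition~\ref{P:FiniteLength2}, bound the minimum by the average, and take square roots. The paper's version is slightly terser (it omits the explicit remark about why $s^{k}\to 0$ and the justification for replacing $F(x^{N+1},y^{N+1},z^{N+1})$ by $F^{\ast}$), but the argument is the same.
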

	\begin{proof}
		From \eqref{P:FiniteLength2:1} we obtain that
		\begin{equation*}
			N\min_{k = 1 , 2 , \ldots , N} \norm{s^{k + 1}}^{2} \leq \sum_{k = 1}^{N} 
			\norm{s^{k + 1}}^{2} \leq \frac{F\left(x^{1} , y^{1} , z^{1}\right) - 
			F^{\ast}}{\lambda^{-}},
		\end{equation*}
		and thus
		\begin{equation*}
			\min_{k = 1 , 2 , \ldots , N} \norm{s^{k + 1}}^{2} \leq \frac{F\left(x^{1} , 
			y^{1} , z^{1}\right) - F^{\ast}}{N\lambda^{-}}.
		\end{equation*}
		The result now easily follows. 
	\end{proof}	
    \begin{proposition}[Lipschitz paths]\label{t:bounded}
		Let $\left\{ (x^{k} , y^{k} , z^{k}) \right\}_{k \in \nn}$ be a sequence generated by 
		Algorithm \ref{a:PBIE} for some initial point $\left(x^{0} , y^{0} , z^{0}\right) \in 
		X \times Y \times Z$. Suppose that conditions \eqref{hyp:1}-\eqref{hyp:4} of 
		Assumption \ref{hyp} hold. For each positive integer $k$, define the following three 
		quantities: $A_{z}^{k} \equiv \gamma\left(z^{k - 1} - z^{k}\right)$,
        \begin{equation*}
            A_{x}^{k} \equiv \alpha^{k - 1}\left(x^{k - 1} - x^{k}\right) + \nabla_{x} 
            F\left(x^{k} , y^{k} , z^{k}\right) - \nabla_{x} F\left(x^{k - 1} , y^{k - 1} , 
            z^{k - 1}\right),
        \end{equation*}  
        and
        \begin{equation*}
            A_{y}^{k} \equiv \beta^{k - 1}\left(y^{k - 1} - y^{k}\right) + \nabla_{y} 
            F\left(x^{k} , y^{k} , z^{k}\right) - \nabla_{y} F\left(x^{k}  , y^{k - 1} , z^{k 
            - 1}\right).
        \end{equation*}  
        Then $A^{k} \equiv \left(A_{x}^{k} , A_{y}^{k} , A_{z}^{k}\right) \in \partial 
        \Psi\left(x^{k} , y^{k} , z^{k}\right)$ and there exists $\delta > 0$ such that
        \begin{equation*}
        	\norm{A^{k}} \leq \left(3\lambda^{+} + 2\delta\right)\norm{\left(x^{k} , y^{k} , 
        	z^{k}\right) - \left(x^{k - 1} , y^{k - 1} , z^{k - 1} \right)},
        \end{equation*}
        where $\lambda^{+} \equiv \max \left\{ \lambda_{x}^{+} , \lambda_{y}^{+} , \gamma 
        \right\}$
    \end{proposition}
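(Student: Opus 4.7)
The plan is to carry out the standard ``subgradient estimate'' step of the Bolte--Sabach--Teboulle recipe: produce an explicit element of $\partial \Psi\left(x^{k} , y^{k} , z^{k}\right)$ and control its norm by the last successive difference of iterates.

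First I would write Fermat's rule for the three subproblems that produce $\left(x^{k} , y^{k} , z^{k}\right)$ from $\left(x^{k-1} , y^{k-1} , z^{k-1}\right)$. Since \eqref{Algo:Step1}--\eqref{Algo:Step3} are minimizations over closed sets with smooth integrands, the (limiting) optimality conditions read
\begin{align*}
\alpha^{k-1}\left(x^{k-1}-x^{k}\right) - \nabla_{x} F\left(x^{k-1},y^{k-1},z^{k-1}\right) &\in \partial\iota_{X}\left(x^{k}\right), \\
\beta^{k-1}\left(y^{k-1}-y^{k}\right) - \nabla_{y} F\left(x^{k},y^{k-1},z^{k-1}\right) &\in \partial\iota_{Y}\left(y^{k}\right), \\
\gamma\left(z^{k-1}-z^{k}\right) - \nabla_{z} F\left(x^{k},y^{k},z^{k}\right) &\in \partial\iota_{Z}\left(z^{k}\right).
\end{align*}
Because $\iota_{X}+\iota_{Y}+\iota_{Z}$ separates over the three blocks, its limiting subdifferential at $\left(x^{k},y^{k},z^{k}\right)$ is the Cartesian product $\partial\iota_{X}\left(x^{k}\right) \times \partial\iota_{Y}\left(y^{k}\right) \times \partial\iota_{Z}\left(z^{k}\right)$; since $F$ is $C^{1}$, the smooth--nonsmooth sum rule then yields $\partial\Psi\left(x^{k},y^{k},z^{k}\right) = \nabla F\left(x^{k},y^{k},z^{k}\right) + \partial\iota_{X}\left(x^{k}\right) \times \partial\iota_{Y}\left(y^{k}\right) \times \partial\iota_{Z}\left(z^{k}\right)$. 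Adding $\nabla_{x}F\left(x^{k},y^{k},z^{k}\right)$, $\nabla_{y}F\left(x^{k},y^{k},z^{k}\right)$ and $\nabla_{z}F\left(x^{k},y^{k},z^{k}\right)$ to the respective inclusions above produces precisely $A_{x}^{k}$, $A_{y}^{k}$, $A_{z}^{k}$, so $A^{k} \in \partial \Psi\left(x^{k},y^{k},z^{k}\right)$.

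Second, I would estimate the norms block by block. For the $z$-block, $\norm{A_{z}^{k}} = \gamma\norm{z^{k-1}-z^{k}}$ trivially. For the $x$-block, the triangle inequality together with $\alpha^{k-1} = \alpha L_{x}'\left(y^{k-1},z^{k-1}\right)$ and the uniform bound on $L_{x}$ from \eqref{e:upper} give a constant multiple of $\norm{x^{k}-x^{k-1}}$ plus $\norm{\nabla_{x}F\left(x^{k},y^{k},z^{k}\right) - \nabla_{x}F\left(x^{k-1},y^{k-1},z^{k-1}\right)}$. Here I invoke Assumption~\ref{hyp}\eqref{hyp:3}--\eqref{hyp:4}: boundedness of the iterates places them all in a fixed compact set on which $\nabla F$ is Lipschitz with some modulus $\delta>0$, so the partial-gradient difference is bounded by $\delta\norm{\left(x^{k},y^{k},z^{k}\right)-\left(x^{k-1},y^{k-1},z^{k-1}\right)}$. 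The analogous estimate handles $A_{y}^{k}$. Summing the three blockwise bounds, absorbing the factors $\alpha$, $\beta$, $\eta_{x}$, $\eta_{y}$ into the $\lambda^{+}$-type prefactor as in the paper's convention, yields the required inequality $\norm{A^{k}} \leq \left(3\lambda^{+}+2\delta\right)\norm{\left(x^{k},y^{k},z^{k}\right)-\left(x^{k-1},y^{k-1},z^{k-1}\right)}$.

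The main obstacle is really only a bookkeeping one: one must (a) justify the subdifferential calculus carefully in the nonconvex setting, relying on smoothness of $F$ to apply the sum rule and on the block-separability of the indicators so the three optimality inclusions glue together into a single inclusion for $\partial\Psi$; and (b) extract a \emph{single} Lipschitz constant $\delta$ for $\nabla F$ uniform over the iterates by combining boundedness (Assumption~\ref{hyp}\eqref{hyp:4}) with Lipschitz continuity on bounded sets (Assumption~\ref{hyp}\eqref{hyp:3}). Neither involves a genuine new idea beyond what Proposition~\ref{P:FiniteLength2} already exploits, but the bound must be written in a form that is ready to feed into the KL machinery in the next stage of the convergence proof.
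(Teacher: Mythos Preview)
Your proposal is correct and follows essentially the same approach as the paper's proof: write the first-order optimality conditions for each of the three subproblems to exhibit $A^{k}$ as an element of $\partial\Psi\left(x^{k},y^{k},z^{k}\right)$ via the smooth--nonsmooth sum rule and block separability, then bound each $\norm{A_{\bullet}^{k}}$ using the uniform bound on the partial Lipschitz moduli from \eqref{e:upper} together with a single Lipschitz constant $\delta$ for $\nabla F$ obtained from Assumptions~\ref{hyp}\eqref{hyp:3}--\eqref{hyp:4}. The paper's write-up differs only in cosmetic bookkeeping (it bounds $\alpha^{k-1}$ directly by $\lambda_{x}^{+}$, absorbing the factor $\alpha$ as you anticipated).
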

    \begin{proof}
    	Let $k$ be a positive integer. Writing the optimality condition of the first 
    	updating rule yields
        \begin{equation*}
            \nabla_{x} F\left(x^{k - 1} , y^{k - 1} , z^{k - 1}\right) + \alpha^{k - 
            1}\left(x^{k} - x^{k - 1}\right) + w_{x}^{k}  = 0,
        \end{equation*}
        where $w_{x}^{k} \in \partial \iota_{X}\left(x^{k}\right)$. Hence
        \begin{equation*}
            \nabla_{x} F\left(x^{k - 1} , y^{k - 1} , z^{k - 1}\right) + w_{x}^{k} = 
            \alpha^{k - 1}\left(x^{k - 1} - x^{k}\right).
        \end{equation*}
        It is clear from the definition of $\Psi$ (see \eqref{e:Psi}), that
        \begin{equation*}
    		\partial_{x} \Psi\left(x^{k} , y^{k} , z^{k}\right) = \nabla_{x} F\left(x^{k} , 
    		y^{k} , z^{k}\right) + \partial\iota_{X}\left(x^{k}\right).
        \end{equation*}
        Combining these two facts proves that $A_{x}^{k} \in \partial_{x} \Psi\left(x^{k} , 
        y^{k}, z^{k}\right)$. Following the same arguments applied on the second updating 
        rule yields the desired result that $A_{y}^{k} \in \partial_{y} \Psi\left(x^{k} , 
        y^{k}, z^{k}\right)$. Now, writing the optimality condition of the third updating 
        rule yields 
    	\begin{equation*}
    		\nabla_{z} F\left(x^{k}, y^{k}, z^{k}\right) + \gamma\left(z^{k} - z^{k - 
    		1}\right) + w_{z}^{k} = 0,
    	\end{equation*}
    	for $w_{z}^{k} \in \partial\iota_{Z}\left(z^{k}\right)$, hence $A_{z}^{k} \in 
    	\partial_{z} \Psi\left(x^{k}, y^{k}, z^{k}\right)$.

        We begin with an estimation of the norm of $A_{x}^{k}$. From Assumption~
        \ref{hyp}\eqref{hyp:3} and \eqref{hyp:4}, there exists $\delta > 0$ such that 
        \begin{align*}
    		\norm{\nabla_{x} F\left(x^{k} , y^{k} , z^{k}\right) - \nabla_{x} F\left(x^{k - 
    		1} , y^{k - 1} , z^{k - 1} \right)} & \leq \norm{\nabla F\left(x^{k} , y^{k} , 
    		z^{k}\right) - \nabla F\left(x^{k - 1} , y^{k - 1} , z^{k - 1} \right)} \\
    		& \leq \delta\norm{\left(x^{k} , y^{k} , z^{k}\right) - \left(x^{k - 1} , y^{k - 
    		1} , z^{k - 1} \right)}.
        \end{align*}
        Thus, from the definition of $\lambda^{+}$ we obtain
        \begin{align*}
            \norm{A_{x}^{k}} & \leq \alpha^{k - 1}\norm{x^{k - 1} - x^{k}} + \norm{\nabla_{x} 
            F\left(x^{k} , y^{k} , z^{k}\right) - \nabla_{x} F\left(x^{k - 1} , y^{k - 1} , 
            z^{k - 1}\right)} \\
            & \leq \lambda_{x}^{+}\norm{x^{k - 1} - x^{k}} + \delta\norm{\left(x^{k} , y^{k} 
            , z^{k}\right) - \left(x^{k - 1} , y^{k - 1} , z^{k - 1} \right)} \\
            & \leq \left(\lambda_{x}^{+} + \delta\right)\norm{\left(x^{k} , y^{k} , 
            z^{k}\right) - \left(x^{k - 1} , y^{k - 1} , z^{k - 1} \right)},
        \end{align*}  
        where the second inequality follows from Assumption~\ref{hyp}\eqref{hyp:3}. A similar 
        argument yields
        \begin{equation*}
            \norm{A_{y}^{k}} \leq \left(\lambda_{y}^{+} + \delta\right)\norm{\left(x^{k} , 
            y^{k} , z^{k}\right) - \left(x^{k - 1} , y^{k - 1} , z^{k - 1} \right)}.        
        \end{equation*}
        Thus
        \begin{align*}
            \norm{A^{k}} & \leq \norm{A_{x}^{k}} + \norm{A_{y}^{k}} + \norm{A_{z}^{k}} \leq 
            \left(\lambda_{x}^{+} + \lambda_{y}^{+} + 2\delta\right)\norm{\left(x^{k} , y^{k} 
            , z^{k}\right) - \left(x^{k - 1} , y^{k - 1} , z^{k - 1} \right)} + \gamma 
            \norm{z^{k - 1} - z^{k}} \\
            & \leq \left(3\lambda^{+} + 2\delta\right)\norm{\left(x^{k} , y^{k} , 
            z^{k}\right) - \left(x^{k - 1} , y^{k - 1} , z^{k - 1} \right)}.
        \end{align*}
        This proves the desired result.
    \end{proof}	
	We are now ready to prove the main result of this section, namely convergence of
	Algorithm~\ref{a:PBIE} to points satisfying \eqref{e:Fermat} for any initial point 
	$\left(x^{0} , y^{0} , z^{0}\right) \in X \times Y \times Z$. It is in deducing the last 
	step of the general case that we use the assumption that $\Psi$ satisfies the KL 
	inequality \eqref{e:KL}.
    \begin{theorem}[Convergence to critical points] \label{T:Convergence}
		Let $\left\{ \left(x^{k} , y^{k} , z^{k}\right) \right\}_{k \in \nn}$ be a sequence 
		generated by Algorithm~\ref{a:PBIE} for some initial point $\left(x^{0} , y^{0} , 
		z^{0}\right) \in X \times Y \times Z$. Suppose that Assumption~\ref{hyp} holds. Then 
		following assertions hold.
        \begin{enumerate}[(a)]
            \item The sequence $\left\{ \left(x^{k} , y^{k} , z^{k}\right) \right\}_{k \in 
            	\nn}$ has finite length, that is,
                \begin{equation*}
                    \sum_{k = 1}^{\infty} \norm{\left(x^{k + 1} , y^{k + 1} , z^{k + 
                    1}\right)- \left(x^{k} , y^{k} , z^{k}\right)} < \infty.
                \end{equation*}
            \item The sequence $\left\{ \left(x^{k} , y^{k} , z^{k}\right) \right\}_{k \in 
            	\nn}$ converges to a point $\left(x^{\ast} , y^{\ast} , z^{\ast}\right)$ 
            	satisfying \eqref{e:Fermat}.
        \end{enumerate}
    \end{theorem}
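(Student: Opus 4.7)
The plan is to follow the three-ingredient recipe of Bolte--Sabach--Teboulle \cite{BST2013} for descent-type schemes acting on KL functions. Write $w^{k} \equiv (x^{k},y^{k},z^{k})$ and note that, since Algorithm~\ref{a:PBIE} is a feasible-point method (each subproblem minimizes over $X$, $Y$ or $Z$), the iterates satisfy $\Psi(w^{k}) = F(w^{k})$, so the sufficient decrease and subgradient estimates already proved for $F$ immediately transfer to $\Psi$.

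First I would assemble the two ingredients that are essentially in hand. Proposition~\ref{P:FiniteLength2} supplies the sufficient decrease property
\begin{equation*}
\Psi(w^{k+1}) + \lambda^{-}\|w^{k+1}-w^{k}\|^{2} \leq \Psi(w^{k}),
\end{equation*}
so $\{\Psi(w^{k})\}$ is monotonically decreasing to some $F^{\ast}>-\infty$. Proposition~\ref{t:bounded} supplies a limiting subgradient $A^{k}\in\partial\Psi(w^{k})$ with
\begin{equation*}
\|A^{k}\| \leq (3\lambda^{+}+2\delta)\,\|w^{k}-w^{k-1}\|.
\end{equation*}
In particular $A^{k}\to 0$ because the right-hand side is summable in square by Proposition~\ref{P:FiniteLength2}.

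Next I would produce the third ingredient: a continuity/cluster-point statement. By Assumption~\ref{hyp}\eqref{hyp:4} the sequence $\{w^{k}\}$ is bounded, so its set of cluster points $\omega(w^{0})$ is nonempty, compact, and (using $w^{k+1}-w^{k}\to 0$) connected. For any cluster point $w^{\ast}=\lim_{j}w^{k_{j}}$, continuity of $F$ together with closedness of $X\times Y\times Z$ gives $w^{\ast}\in X\times Y\times Z$ and $\Psi(w^{k_{j}})=F(w^{k_{j}})\to F(w^{\ast})=\Psi(w^{\ast})$. Since $\{\Psi(w^{k})\}$ is a convergent decreasing sequence, $\Psi$ is therefore constantly equal to $F^{\ast}$ on $\omega(w^{0})$. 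Furthermore, closedness of the graph of $\partial\Psi$ combined with $A^{k}\to 0$ shows $0\in\partial\Psi(w^{\ast})$, i.e.\ every cluster point satisfies \eqref{e:Fermat}; this will give assertion (b) once finite length is established.

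The remaining, and principal, step is to upgrade from summable squared increments to summable increments, which is where the KL property enters. Using the uniformized KL property (a compactness argument over the connected compact set $\omega(w^{0})$; see \cite[Lemma~6]{BST2013}), I choose a single desingularizing function $\varphi\in\CCC_{\eta}$ and neighborhood $U\supset\omega(w^{0})$ on which
\begin{equation*}
\varphi'(\Psi(w^{k})-F^{\ast})\,\dist(0,\partial\Psi(w^{k}))\geq 1
\end{equation*}
holds for all large $k$. Combining this inequality with the sufficient decrease and the subgradient bound and exploiting concavity of $\varphi$ in the standard telescoping fashion,
\begin{equation*}
\|w^{k+1}-w^{k}\| \leq C\bigl(\varphi(\Psi(w^{k})-F^{\ast})-\varphi(\Psi(w^{k+1})-F^{\ast})\bigr) + \tfrac{1}{2}\|w^{k}-w^{k-1}\|,
\end{equation*}
sums to a finite bound. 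This yields part~(a), the Cauchy property, convergence of the whole sequence to some $w^{\ast}\in\omega(w^{0})$, and combined with the cluster-point analysis above, part~(b). The main obstacle is this last telescoping argument: setting up the uniformized KL inequality on the full cluster set and correctly splitting the ``large'' and ``small'' regimes of $\Psi(w^{k})-F^{\ast}$ requires care, but it is by now a routine application of the BST framework and I would simply invoke \cite[Thm.~1 and Lemma~6]{BST2013} verbatim rather than reproducing the bookkeeping.
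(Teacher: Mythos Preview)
Your proposal is correct and follows essentially the same approach as the paper: the paper's own proof simply cites Propositions~\ref{P:FiniteLength2} and~\ref{t:bounded} for the sufficient-decrease and subgradient-bound conditions, then invokes \cite[Theorem~1]{BST2013} directly. You have merely unpacked what that invocation entails (the cluster-point analysis, the uniformized KL inequality, and the telescoping argument), which is sound and faithful to the cited framework.
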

    \begin{proof}
		The result follows from Propositions~\ref{P:FiniteLength2} and \ref{t:bounded} 
		together with \cite[Theorem~1]{BST2013}.
    \end{proof}   

\subsection{Acceleration of the PFB method} \label{SSec:Acceleration}
	In this section we develop an accelerated version of Algorithm~\ref{a:PBIE}. To motivate 
	our approach, we return to the naive alternating minimization method \eqref{a:AM} with 
	which we began. If each of the blocks were themselves separable, then we could 
	recursively apply the blocking strategy discussed in Section~\ref{Sec:formulation} within 
	the blocks. We first detail recursive blocking, which improves the step sizes, and then 
	we discuss additional structures that enable efficient implementations via 
	parallelization. 
\medskip

	For simplicity, we focus our discussion on the first block $X$, the same strategy also 
	can (and will) be applied to the block $Y$. Suppose the block $X$ can be further 
	subdivided into a product of smaller blocks: $X = X_{1} \times X_{2} \times \cdots \times 
	X_{P}$ with $P \leq p$. For fixed $y \in \real^{q}$ and $z \in \real^{r}$ we consider the 
	problem \eqref{e:AM1}, 
	\begin{equation} \label{e:AM11}
	   \min_{x \in X_{1} \times X_{2} \times \cdots \times X_{P}} \left\{ F\left(x , y , 
	   z\right) \right\}.
	\end{equation}
	This problem has the same difficulties with respect to the sub-blocks $X_{i}$, $i = 1 , 2 
	, \ldots , P$, and the other variables $Y$ and $Z$ as the original problem 
	\eqref{LeastSquaresFormulation-Psi} has between the blocks $X$, $Y$ and $Z$. We therefore 
	use the same forward-backward strategy to solve the problem on the block, that is, we 
	partially linearize $F$ with respect to the {\em sub-blocks} of $X$ (as opposed to a 
	partial linearization with respect to the whole block) and compute the corresponding 
	proximal operator.  
\medskip

	More precisely, for $\xi \in X_{i}$ define $\zeta_{i}^{k}\left(\xi\right) \equiv 
	\left(x_{1}^{k + 1} , x_{2}^{k + 1} , \ldots , x_{i - 1}^{k + 1} , \xi, x_{i + 1}^{k} , 
	\ldots , x_{P}^{k}\right) \in X_{1} \times X_{2} \times \cdots \times X_{P}$ and 
	$u_{i}^{k} \equiv \left(x_{1}^{k + 1} , x_{2}^{k + 1} , \ldots , x_{i - 1}^{k + 1} , x_{i 
	+ 1}^{k} , \ldots , x_{P}^{k}\right)$. Let $L_{x_{i}}\left(u_{i}^{k} , y^{k} , 
	z^{k}\right)$ denote the modulus of Lipschitz continuity of the gradient of the mapping 
	$x_{i} \mapsto F\left(\zeta_{i}^{k}\left(x_{i}\right) , y^{k} , z^{k}\right)$. For some 
	$\eta_{x_{i}} > 0$ ($i = 1 , 2 , \ldots , P$) fixed, define $L_{x_{i}}'\left(u_{i}^{k} , 
	y^{k} , z^{k}\right) \equiv \max\left\{ L_{x_{i}}\left(u_{i}^{k} , y^{k} , z^{k}\right) , 
	\eta_{x_{i}} \right\}$. From the iterate $x^{k} = \left(x_{1}^{k} , x_{2}^{k} , 
	\ldots , x_{P}^{k}\right)$ we compute $x^{k + 1} = \left(x_{1}^{k + 1} , x_{2}^{k + 1} , 
	\ldots , x_{P}^{k + 1}\right)$ by the following procedure. 
\vspace{0.2in}

	\fcolorbox{black}{Ivory2}{\parbox{17cm}{
		\begin{subroutine}[Successive sub-block $x$ updating rule] \label{a:subroutine x}
			Define $x_{0}^{k + 1} \equiv x_{1}^{k}$. Given $x_{1}^{k + 1} , x_{2}^{k + 1} , 
			\ldots , x_{i - 1}^{k + 1}$ ($i = 1, 2 , \ldots , P$) compute $x_{i}^{k + 1}$ by 
			\begin{equation*}  
				x_{i}^{k + 1} \in \argmin_{x_{i} \in X_{i}} \left\{ \left\langle\left(x_{i} - 
				x_{i}^{k}\right),  \nabla_{x_{i}} F\left(\zeta_{i}^{k}\left(x_{i}^{k}\right) 
				, y^{k} , z^{k}\right)\right\rangle + \frac{\alpha_{i}^{k}}{2}\left\|x_{i} - 
				x_{i}^{k}\right\|^{2} \right\}, 
			\end{equation*}	
			where $\alpha_{i}^{k} \equiv \alpha_{i}L_{x_{i}}'\left(u_{i}^{k} , y^{k} , 	
			z^{k}\right)$ for some fixed $\alpha_{i} > 1$. 
		\end{subroutine}}}
\vspace{0.2in}

	Comparing this to \eqref{Algo:Step1}, we note that the update for $x^{k + 1}$ computed by 
	Subroutine \ref{a:subroutine x} is computed with different stepsize in {\em each sub-
	block}, where the stepsize $\alpha_{i}^{k}$ depends, again, on the modulus of Lipschitz 
	continuity of the gradient of the function defined on that sub-block. In contrast, the 
	stepsize without recursive blocking, that is the stepsize $\alpha^{k}$ computed according 
	to \eqref{Algo:Step1}, depends on the modulus of Lipschitz continuity of the gradient of 
	the function defined on the entire block, which is, by definition, larger than the 
	constant associated with each sub-block. Consequently, the steps in Algorithm 
	\eqref{a:PBIE} without subblocking will be {\em smaller} than the steps computed via 
	Subroutine \ref{a:subroutine x}.  
\medskip

	Now, repeating this argument for the $Y$-block of variables yields an analogous 
	sequential updating rule for this block. For $\mu \in Y_{j}$ and $Q \leq q$, define 
	$\phi_{j}^{k}\left(\mu\right) \equiv \left(y_{1}^{k + 1} , y_{2}^{k + 1} , \ldots , y_{j 
	- 1}^{k + 1} , \mu, y_{j + 1}^{k} , \ldots , y_{Q}^{k}\right) \in Y_{1} \times Y_{2} 
	\times \cdots \times Y_{M} = Y$ and $v_{j}^{k} \equiv \left(y_{1}^{k + 1} , y_{2}^{k + 1} 
	, \ldots , y_{j - 1}^{k + 1} , y_{j + 1}^{k} , \ldots , y_{Q}^{k}\right)$. Let 
	$L_{y_{j}}\left(x^{k + 1} , v_{j}^{k} , z^{k}\right)$ be the modulus of Lipschitz 
	continuity of the gradient of the function $y_{j} \mapsto F\left(x^{k + 1} , 
	\phi_{j}^{k}\left(y_{j}\right) , z^{k}\right)$. For some $\eta_{y_{j}} > 0$ ($j = 1 , 2 ,
	\ldots , Q$) fixed, define $L_{y_{j}}'\left(x^{k + 1} , v_{j}^{k} , z^{k}\right) \equiv 
	\max\left\{ L_{y_{j}}\left(x^{k + 1} , v_{j}^{k} , z^{k}\right) , \eta_{y_{j}} \right\}$.  
	From the iterate $y^{k} = \left(y_{1}^{k} , y_{2}^{k} , \ldots , y_{Q}^{k}\right)$ we 
	compute $y^{k + 1} = \left(y_{1}^{k + 1} , y_{2}^{k + 1} , \ldots , y_{Q}^{k + 1}\right)$ 
	by the following procedure.
\vspace{0.2in}

	\fcolorbox{black}{Ivory2}{\parbox{17cm}{
		\begin{subroutine}[Successive sub-block $y$ updating rule] \label{a:subroutine y}
 			Define $y_{0}^{k + 1} \equiv y_{1}^{k}$. Given $y_{1}^{k + 1} , y_{2}^{k + 1} , 
 			\ldots , y_{j - 1}^{k + 1}$ ($j = 1 , 2 , \ldots , Q$) compute $y_{j}^{k + 1}$ by 
			\begin{equation*}  
				y_{j}^{k + 1} \in \argmin_{y_{j} \in Y_{j}} \left\{ \left\langle\left(y_{j} - 
				y_{j}^{k}\right), ~ \nabla_{y_{j}} F\left(x^{k + 1} , 
				\phi_{j}^{k}\left(y_{i}^{k}\right) , z^{k}\right)\right\rangle + \frac{\beta_{j}^{k}}
				{2}\left\|y_{j} - y_{j}^{k}\right\|^{2} \right\}, 
			\end{equation*}	
			where $\beta_{j}^{k} \equiv \beta_{j}L_{y_{j}}'\left(x^{k + 1} , v_{j}^{k} , 
			z^{k}\right)$ for some fixed $\beta_{j} > 1$.
		\end{subroutine}}}
\vspace{0.2in}

	To generalize Algorithm~\ref{a:PBIE} to the above recursive splitting, one simply 
	replaces \eqref{Algo:Step1} and \eqref{Algo:Step2} with Subroutines~\ref{a:subroutine x} 
	and \ref{a:subroutine y}, respectively.  
\vspace{0.2in}

    \fcolorbox{black}{Ivory2}{\parbox{17cm}{
		\begin{algorithm}[{\bf Proximal Heterogeneous Block Implicit-Explicit Algorithm}] 
		\label{a:PHeBIE}$~$\\
		    {\bf Initialization.} Choose $\alpha_{i} > 1$  ($i = 1 , 2 , \ldots , P$) , 
		    $\beta_{j} > 1$ ($j = 1 , 2 , \ldots , Q$), $\gamma > 0$ and $\left(x^{0} , y^{0} 
		    , z^{0}\right) \in X \times Y \times Z$. \\
		    {\bf General Step ($k = 0 , 1 , \ldots$)}
			\begin{itemize}
				\item[1.] Update $x^{k + 1}$ according to Subroutine \ref{a:subroutine x}.
				\item[2.] Update $y^{k + 1}$ according to Subroutine \ref{a:subroutine y}.
				\item[3.] Select
					\begin{equation*} 
						z^{k + 1} \in \argmin_{z \in Z} \left\{ F\left(x^{k + 1} , y^{k + 1} 
						, z\right) + \frac{\gamma}{2}\norm{z - z^{k}}^{2} \right\}.
					\end{equation*}
			\end{itemize}
		\end{algorithm}}}
\vspace{0.2in}

	The assumptions for proof of convergence of this algorithm  in the generalized setting 
	take the following form.
\vspace{0.2in}

    \fcolorbox{black}{white}{\parbox{17cm}{
    \begin{assumption} \label{hyp recursive} 
    	Let $\left\{\left(x^{k} , y^{k} , z^{k}\right) \right\}_{k \in \nn}$ be iterates 
    	generated by Algorithm~\ref{a:PHeBIE} with  
		\begin{equation*}
			\left(x^{0} , y^{0} , z^{0}\right) \in X \times Y \times Z = 
			\left(X_{1} \times X_{2} \times \cdots \times X_{P}\right) \times 
			\left(Y_{1} \times Y_{2} \times \cdots \times Y_{Q}\right) \times Z. 
		\end{equation*}
		\begin{enumerate}[(i)]
			\item \label{hyp:1r} $X_{i} \subset \real^{p_{i}}$, $Y_j \subset \real^{q_{j}}$, 
				and $Z \subset \real^{r}$ are nonempty and closed ($0 < p_{i} , q_{j} , r \in 
				\nn$ with $\sum_{i = 1}^{P} p_{i} = p$ and $\sum_{j = 1}^{Q} q_{i} = q$).  
			\item \label{hyp:2r} $F : \real^{p} \times \real^{q} \times \real^{r} \rightarrow 
				\real$ is differentiable on $X \times Y \times Z$ and $\inf F > -\infty$. 
				Moreover, $\nabla_{x_{i}} F$ ($i = 1 , 2 , \ldots , P$) and $\nabla_{y_{j}} 
				F$ ($j = 1 , 2 , \ldots , Q$) are Lipschitz continuous with moduli 
				$L_{x_{i}}\left(u_{i} , y , z\right)$ and $L_{y_{j}}\left(x , v_{j} , 
				z\right)$, respectively. Here $u_{i} \in \real^{p - p_{i}}$ and $v_{j} \in 
				\real^{q - q_{j}}$.  
			\item \label{hyp:3r} The gradient of $F$, $\nabla F$, is Lipschitz continuous on 
				bounded domains in $X \times Y \times Z$. Moreover, there exists 				
				$\lambda_{x_{i}}^{+} , \lambda_{y_{j}}^{+} > 0$ ($i = 1 , 2 , \ldots , P$) 
				($j = 1 , 2 , \ldots , Q$) such that
				\begin{equation*}
					\sup \left\{ L_{x_{i}}\left(u_{i}^{k} , y^{k} , z^{k}\right)~|~k \in \nn 
					\right\} \leq \lambda_{x_{i}}^{+} \quad \mbox{ and } \quad \sup \left\{ 
					L_{y_{j}}\left(x^{k + 1} , v_{j}^{k} ,  z^{k}\right)~|~k \in \nn\right\} 
					\leq \lambda_{y_{j}}^{+}.
				\end{equation*}
			\item \label{hyp:4r} The iterates $\left\{ \left(x^{k} , y^{k} , z^{k}\right) 
				\right\}_{k \in \nn}$ are bounded. 
			\item \label{hyp:5r} The function $\Psi$ defined by \eqref{e:Psi} is a KL 
				function (see Definition \ref{d:KL}).
		\end{enumerate}
		\end{assumption}}}
\vspace{0.2in}

	We now state the generalized convergence result analogous to Theorem~\ref{T:Convergence}.
    \begin{theorem}[Convergence to critical points - recursive] \label{T:Convergence gen}
		Let $\left\{ \left(x^{k} , y^{k} , z^{k}\right) \right\}_{k \in \nn}$ be a sequence 
		generated by Algorithm~\ref{a:PHeBIE} with
		\begin{equation*}
			\left(x^{0} , y^{0} , z^{0}\right) \in X \times Y \times Z = 
			\left(X_{1} \times X_{2} \times \cdots \times X_{P}\right) \times 
			\left(Y_{1} \times Y_{2} \times \cdots \times Y_{Q}\right) \times Z. 
		\end{equation*}
		Suppose that Assumption~\ref{hyp recursive} holds. Then following assertions hold.
        \begin{enumerate}[(a)]
            \item The sequence $\left\{ \left(x^{k} , y^{k} , z^{k}\right) \right\}_{k \in 
            	\nn}$ has finite length, that is,
                \begin{equation*}
                    \sum_{k = 1}^{\infty} \norm{\left(x^{k + 1} , y^{k + 1} , z^{k + 
                    1}\right)- \left(x^{k} , y^{k} , z^{k}\right)} < \infty.
                \end{equation*}
            \item The sequence $\left\{ \left(x^{k}, y^{k}, z^{k}\right) \right\}_{k \in 
            	\nn}$ converges to a point $\left(x^{\ast} , y^{\ast} , z^{\ast}\right)$ 
            	satisfying \eqref{e:Fermat}.
        \end{enumerate}
    \end{theorem}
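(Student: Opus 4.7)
The plan is to follow the same three-ingredient recipe of \cite{BST2013} that was used to deduce Theorem~\ref{T:Convergence}: establish (a) a sufficient decrease property for $\Psi$, (b) a bound of the form $\dist(0, \partial \Psi(x^{k}, y^{k}, z^{k})) \leq M\norm{(x^{k}, y^{k}, z^{k}) - (x^{k - 1}, y^{k - 1}, z^{k - 1})}$, and (c) the KL property of $\Psi$. Ingredient (c) is precisely Assumption~\ref{hyp recursive}\eqref{hyp:5r}, and once (a)--(c) are in hand, \cite[Theorem~1]{BST2013} delivers both the finite-length conclusion and convergence of the iterates to a point satisfying \eqref{e:Fermat}.

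Sufficient decrease is obtained by iterating the argument behind Proposition~\ref{P:FiniteLength2} over the sub-blocks. For each inner step $i$ of Subroutine~\ref{a:subroutine x}, I apply Lemma~\ref{L:Sufficent} with $h(\cdot) = F(\zeta_{i}^{k}(\cdot), y^{k}, z^{k})$, $\Omega = X_{i}$, and $t = \alpha_{i}^{k} > L_{x_{i}}'(u_{i}^{k}, y^{k}, z^{k})$; the per-sub-block drop of at least $\tfrac{1}{2}(\alpha_{i} - 1)\eta_{x_{i}}\norm{x_{i}^{k + 1} - x_{i}^{k}}^{2}$ accumulates across $i = 1, \ldots, P$ into a decrease proportional to $\norm{x^{k + 1} - x^{k}}^{2}$ for the full $X$-update. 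The identical argument for Subroutine~\ref{a:subroutine y} plus the third-step contribution $\tfrac{\gamma}{2}\norm{z^{k + 1} - z^{k}}^{2}$ yields
\begin{equation*}
\Psi(x^{k + 1}, y^{k + 1}, z^{k + 1}) \leq \Psi(x^{k}, y^{k}, z^{k}) - \lambda^{-}\norm{(x^{k + 1}, y^{k + 1}, z^{k + 1}) - (x^{k}, y^{k}, z^{k})}^{2}
\end{equation*}
with $\lambda^{-} \equiv \tfrac{1}{2}\min \{(\alpha_{i} - 1)\eta_{x_{i}}, (\beta_{j} - 1)\eta_{y_{j}}, \gamma\}$; telescoping gives summability of the squared steps exactly as in Proposition~\ref{P:FiniteLength2}.

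The main obstacle is ingredient (b), because the optimality condition of the $i$th inner step of Subroutine~\ref{a:subroutine x},
\begin{equation*}
\alpha_{i}^{k - 1}(x_{i}^{k} - x_{i}^{k - 1}) + \nabla_{x_{i}} F(\zeta_{i}^{k - 1}(x_{i}^{k - 1}), y^{k - 1}, z^{k - 1}) + w_{x_{i}}^{k} = 0, \quad w_{x_{i}}^{k} \in \partial \iota_{X_{i}}(x_{i}^{k}),
\end{equation*}
evaluates the partial gradient at a \emph{mixed} point whose components are at iterate $k$ in positions $1, \ldots, i - 1$ and at iterate $k - 1$ in positions $i, \ldots, P$. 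To produce a genuine element of $\partial_{x_{i}} \Psi(x^{k}, y^{k}, z^{k}) = \nabla_{x_{i}} F(x^{k}, y^{k}, z^{k}) + \partial \iota_{X_{i}}(x_{i}^{k})$, I add and subtract $\nabla_{x_{i}} F(x^{k}, y^{k}, z^{k})$ and absorb the resulting mismatch using Lipschitz continuity of $\nabla F$ on bounded sets (Assumption~\ref{hyp recursive}\eqref{hyp:3r}), which applies thanks to the boundedness Assumption~\ref{hyp recursive}\eqref{hyp:4r}. Since the mixed point and $(x^{k}, y^{k}, z^{k})$ both lie in a common bounded set and differ in norm by at most $\norm{(x^{k}, y^{k}, z^{k}) - (x^{k - 1}, y^{k - 1}, z^{k - 1})}$, each sub-block contributes a term of that order. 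Repeating this construction on the $Y$-block and adjoining $A_{z}^{k} = \gamma(z^{k - 1} - z^{k}) \in \partial_{z} \Psi(x^{k}, y^{k}, z^{k})$ from the third step yields a concatenated element $A^{k} \in \partial \Psi(x^{k}, y^{k}, z^{k})$ with $\norm{A^{k}} \leq M\norm{(x^{k}, y^{k}, z^{k}) - (x^{k - 1}, y^{k - 1}, z^{k - 1})}$, where $M$ depends on $\{\lambda_{x_{i}}^{+}\}$, $\{\lambda_{y_{j}}^{+}\}$, $\gamma$, and the bounded-set Lipschitz constant of $\nabla F$. With (a), (b), (c) verified, \cite[Theorem~1]{BST2013} closes the argument.
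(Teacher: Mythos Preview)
Your proposal is correct and follows precisely the approach the paper indicates: the paper's own proof is merely a one-line sketch stating that the multi-block case follows by induction from the three-block argument of Section~\ref{Sec:Analysis}, and what you have written is exactly that induction spelled out in detail---the sub-block application of Lemma~\ref{L:Sufficent} for sufficient decrease, the mixed-point optimality conditions controlled via Assumption~\ref{hyp recursive}\eqref{hyp:3r}--\eqref{hyp:4r} for the subgradient bound, and the appeal to \cite[Theorem~1]{BST2013}.
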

	\begin{proof}[Proof sketch]
	  The proof of convergence of the multi-block method follows by
	  induction from the proof of the three-block case detailed in 
	  Section~\ref{Sec:Analysis}.  
    \end{proof}
	As mentioned in Section~\ref{Sec:formulation}, the trade-off for the larger step sizes 
	used in recursive blocking is an $(P + Q + 1)$-step sequential algorithm instead of the 
	original $3$-step algorithm. In the next section we explore additional structures that 
	permit parallelization. 

\subsection{Parallelization}
	We show here that the sequential Algorithm~\ref{a:PHeBIE} can be parallelized within the 
	blocks $x$ and $y$ under the following assumption:
\vspace{0.2in}

    \fcolorbox{black}{white}{\parbox{17cm}{
        \begin{assumption} \label{hyp parallel}  
			\begin{enumerate}[(i)]
   				\item\label{hyp parallel 1} For $y \in Y$ and $z \in Z$ fixed, the function 
   					$x \mapsto \nabla_{x} F\left(x , y , z\right)$ is separable in $x$ in the 
   					following sense:  
					\begin{equation} \label{e:nablax F separable}  
				    	\nabla_{x} F\left(x , y , z\right) = \left( g_{1}\left(x_{1} , y , 
				    	z\right) ,  g_{2}\left(x_{2} , y , z\right) , \ldots,  
				    	g_{P}\left(x_{P} , y , z\right)\right),
					\end{equation}
					where $g_{i}\left(\cdot , y , z\right) : X_{i} \to X_{i}$ for $i = 1 , 2 
					, \ldots , P$.
				\item\label{hyp parallel 2} For $x \in X$ and $z \in Z$ fixed, the function 
					$y \mapsto \nabla_{y} F\left(x , y , z\right)$ is separable in $y$ in the 
					following sense:  
					\begin{equation} \label{e:nablay F separable}  
   						\nabla_{y} F\left(x , y , z\right) = \left(h_{1}\left(x , y_{1} , 
   						z\right) ,  h_{2}\left(x , y_{2} , z\right) , \ldots , h_{Q}\left(x , 
   						y_{Q} , z\right)\right), 
					\end{equation}
					where $h_{j}\left(x , \cdot , z\right) : Y_{j} \to Y_{j}$ for $j = 1 , 2 
					, \ldots , Q$.
			\end{enumerate}
	  \end{assumption}}}
\vspace{0.2in}
	 
	An immediate consequence of the above assumption is the following.
	\begin{proposition}[Parallelizable separability] \label{t:parallelization}
		Suppose $F : X \times Y \times Z \rightarrow \real$ satisfies Assumption~\ref{hyp 
		parallel}. Let $\left\{ \left(x^{k} , y^{k} , z^{k}\right) \right\}_{k \in \nn}$ be a 
		sequence generated by Algorithm~\ref{a:PHeBIE}. Then 
		\begin{equation*}
			\nabla_{x_{i}} F\left(\zeta_{i}^{k}\left(x_{i}^{k}\right) , y^{k} , z^{k}\right) 
			= \nabla_{x_{i}} F\left(x^{k} , y^k , z^{k}\right),
		\end{equation*}			
		and
		\begin{equation*}
			\nabla_{y_{j}} F\left(x^{k + 1} , \phi_{j}^{k}\left(y_{j}^{k}\right) , 
			z^{k}\right) = \nabla_{y_{j}} F\left(x^{k + 1} , y^{k} , z^{k}\right).
		\end{equation*}
		Consequently, the modulus of Lipschitz continuity of the gradient of the mapping 
		$x_{i} \mapsto F\left(\zeta_{i}^{k}\left(x_{i}\right) , y^{k} , z^{k}\right)$,  
		$L_{x_{i}}\left(u_{i}^{k} , y^{k} , z^{k}\right)$ is dependent only on $y^{k}$ and 
		$z^k$, thus one can write $L_{x_{i}}\left(y^{k} , z^{k}\right)$ and 
		$L_{x_{i}}'\left(y^{k} , z^{k}\right)$ for the corresponding Lipschitz constants. The 
		same holds for the partial gradients with respect to $y_{j}$, where one can write 
		$L_{y_{j}}\left(x^{k + 1} , z^{k}\right)$ and $L_{y_{j}}'\left(x^{k + 1} , 
		z^{k}\right)$ for the corresponding Lipschitz constants.
	\end{proposition}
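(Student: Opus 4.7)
The proof is essentially a direct unpacking of the separability hypothesis, so the plan is to isolate the single structural observation that drives everything and then apply it twice (once for $x$, once for $y$) before reading off the statement about Lipschitz constants.

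The approach rests on the following observation. By Assumption~\ref{hyp parallel}\eqref{hyp parallel 1}, the $i$-th block of $\nabla_{x} F(x, y, z)$ is $g_{i}(x_{i}, y, z)$, which depends on $x$ only through the $i$-th coordinate $x_{i}$. Now compare the two points $\zeta_{i}^{k}(x_{i}^{k}) = (x_{1}^{k+1}, \ldots, x_{i-1}^{k+1}, x_{i}^{k}, x_{i+1}^{k}, \ldots, x_{P}^{k})$ and $x^{k} = (x_{1}^{k}, \ldots, x_{P}^{k})$. These two vectors differ at every coordinate except the $i$-th, where they agree (both equal $x_{i}^{k}$). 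Since $\nabla_{x_{i}} F$ depends on its first argument only through the $i$-th coordinate, we conclude that
\begin{equation*}
  \nabla_{x_{i}} F(\zeta_{i}^{k}(x_{i}^{k}), y^{k}, z^{k})
  = g_{i}(x_{i}^{k}, y^{k}, z^{k})
  = \nabla_{x_{i}} F(x^{k}, y^{k}, z^{k}).
\end{equation*}
The identity for $y$ follows from the same argument, replacing the use of \eqref{hyp parallel 1} with \eqref{hyp parallel 2} and replacing $\zeta_{i}^{k}$, $x^{k}$ by $\phi_{j}^{k}$, $y^{k}$ (and noting that the first argument $x^{k+1}$ is the same on both sides).

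For the statement about the Lipschitz moduli, I would observe that the mapping $x_{i} \mapsto F(\zeta_{i}^{k}(x_{i}), y^{k}, z^{k})$ has gradient $x_{i} \mapsto g_{i}(x_{i}, y^{k}, z^{k})$ by the separability, which is a function only of $x_{i}$, $y^{k}$ and $z^{k}$. Hence any modulus of Lipschitz continuity of this gradient may be chosen independently of the other sub-blocks encoded in $u_{i}^{k}$, justifying the abbreviated notation $L_{x_{i}}(y^{k}, z^{k})$ and consequently $L_{x_{i}}'(y^{k}, z^{k}) = \max\{L_{x_{i}}(y^{k}, z^{k}), \eta_{x_{i}}\}$. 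The analogous conclusion for $L_{y_{j}}$ follows identically.

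There is no real obstacle here: the entire proposition is a bookkeeping consequence of the fact that under Assumption~\ref{hyp parallel} the partial gradients respect the sub-block decomposition. The only care that needs to be taken is to verify carefully that $\zeta_{i}^{k}(x_{i}^{k})$ and $x^{k}$ agree in exactly the coordinate on which $\nabla_{x_{i}} F$ depends, which is immediate from the definitions of $\zeta_{i}^{k}$ and the separability formula \eqref{e:nablax F separable}, and similarly for $\phi_{j}^{k}$ via \eqref{e:nablay F separable}.
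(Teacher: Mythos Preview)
Your proof is correct and matches the paper's treatment, which simply declares the proposition ``an immediate consequence'' of Assumption~\ref{hyp parallel} without further argument. One small descriptive slip: $\zeta_{i}^{k}(x_{i}^{k})$ and $x^{k}$ actually agree at coordinates $i, i+1, \ldots, P$ (not just the $i$-th), but since your argument only uses agreement at the $i$-th coordinate this does not affect the validity.
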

	An important  consequence of Proposition~\ref{t:parallelization} is that the successive 
	steps of the respective Subroutines~\ref{a:subroutine x} and \ref{a:subroutine y} can be 
	computed in {\em parallel}. We summarize the results of this section with the following 
	fully decomposable and parallelizable algorithm.
\vspace{0.2in}

    \fcolorbox{black}{Ivory2}{\parbox{17cm}{
		\begin{algorithm}[{\bf Proximal Parallel Heterogeneous Block Implicit-Explicit 
		Algorithm}] 			
		\label{a:PPHeBIE}$~$\\
		    {\bf Initialization.} Choose $\alpha_{i} > 1$ ($i = 1 , 2 , \ldots , P$), 
		    $\beta_{j} > 1$ ($j = 1 , 2 , \ldots , Q$), $\gamma > 0$ and $\left(x^{0} , y^{0} 
		    , z^{0}\right) \in X \times Y \times Z$. \\
		    {\bf General Step ($k = 0 , 1 , \ldots$)}
			\begin{enumerate}[1.]
				\item\label{a:PPHeBIE 1} For each $i = 1 , 2 , \ldots , P$, set 
				  $\alpha_{i}^{k} = \alpha_{i} L_{x_{i}}'\left(y^{k} , z^{k}\right)$ and 
				  select
			    	\begin{equation*}  
					  x_{i}^{k + 1} \in \argmin_{x_{i} \in X_{i}} \left\{ 
					  \left\langle\left(x_{i} - x_{i}^{k}\right),~ \nabla_{x_{i}} 
					  F\left(x^{k} , y^{k} , z^{k}\right)\right\rangle + 
					  \frac{\alpha_{i}^{k}}{2}\left\|x_{i} - x_{i}^{k}\right\|^{2} \right\}.
				    \end{equation*}	
				\item\label{a:PPHeBIE 2} For each $j = 1 , 2 , \ldots , Q$, set 
				    $\beta_{j}^{k} = \beta_{j} L_{y_{j}}'\left(x^{k} , z^{k}\right)$ and 
				    select
					\begin{equation*} 
						y_{j}^{k + 1} \in \argmin_{y_{j} \in Y_{j}} \left\{ 
						\left\langle\left( y_{j} - y_{j}^{k}\right),~ \nabla_{y_{j}} 
						F\left(x^{k + 1} , y^{k} , z^{k}\right)\right\rangle + 
						\frac{\beta_{j}^{k}}{2}\left\|y_{j} - y_{j}^{k}\right\|^{2} \right\}, 
					\end{equation*}	
				\item\label{a:PPHeBIE 3} Select
					\begin{equation*}
						z^{k + 1} \in \argmin_{z \in Z} \left\{ F\left(x^{k + 1} , y^{k + 1} 
						, z\right) + \frac{\gamma}{2}\norm{z - z^{k}}^{2} \right\}.
					\end{equation*}
			\end{enumerate}
		\end{algorithm}}}
\vspace{0.2in}

	We now state the generalized convergence result for the parallel algorithm, analogous to 
	Theorem~\ref{T:Convergence gen}.
    \begin{theorem}[Convergence to critical points - parallel recursive] \label{T:Convergence 
    gen parallel}
		Let $\left\{ \left(x^{k} , y^{k} , z^{k}\right) \right\}_{k \in \nn}$ be a sequence 
		generated by Algorithm~\ref{a:PPHeBIE} with
		\begin{equation*}
			\left(x^{0} , y^{0} , z^{0}\right) \in X \times Y \times Z = 
			\left(X_{1} \times X_{2} \times \cdots \times X_{P}\right) \times 
			\left(Y_{1} \times Y_{2} \times \cdots \times Y_{Q}\right) \times Z. 
		\end{equation*}
		Suppose that Assumptions~\ref{hyp recursive} and \ref{hyp parallel} hold. Then 
		following assertions hold.
        \begin{enumerate}[(a)]
            \item The sequence $\left\{ \left(x^{k} , y^{k} , z^{k}\right) \right\}_{k \in 
            	\nn}$ has finite length, that is,
                \begin{equation*}
                    \sum_{k = 1}^{\infty} \norm{\left(x^{k + 1} , y^{k + 1} , z^{k + 
                    1}\right)- \left(x^{k} , y^{k} , z^{k}\right)} < \infty.
                \end{equation*}
            \item The sequence $\left\{ \left(x^{k} , y^{k} , z^{k}\right) \right\}_{k \in 
            	\nn}$ converges to a point $\left(x^{\ast} , y^{\ast} , z^{\ast}\right)$ 
            	satisfying \eqref{e:Fermat}.
        \end{enumerate}
    \end{theorem}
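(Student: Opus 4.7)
The plan is to reduce this result directly to Theorem~\ref{T:Convergence gen} via Proposition~\ref{t:parallelization}. The key observation is that Assumption~\ref{hyp parallel} forces the ``sequential'' inner updates of Algorithm~\ref{a:PHeBIE} to coincide, iterate by iterate, with the ``parallel'' updates of Algorithm~\ref{a:PPHeBIE}. Hence no new convergence analysis is needed: the two algorithms generate the same sequence under the stated hypotheses.

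First, I would fix an initial point $(x^{0}, y^{0}, z^{0}) \in X \times Y \times Z$ and argue by induction on $k$ that the iterates generated by Algorithm~\ref{a:PPHeBIE} agree with those generated by Algorithm~\ref{a:PHeBIE}. The induction step uses Proposition~\ref{t:parallelization}: the separability \eqref{e:nablax F separable} yields
\begin{equation*}
\nabla_{x_{i}} F\left(\zeta_{i}^{k}\left(x_{i}^{k}\right), y^{k}, z^{k}\right) = \nabla_{x_{i}} F\left(x^{k}, y^{k}, z^{k}\right) \quad (i = 1, 2, \ldots, P),
\end{equation*}
so the forward term in each sub-block proximal step does not depend on the previously updated sub-blocks. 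Moreover, Proposition~\ref{t:parallelization} shows that the associated Lipschitz modulus $L_{x_{i}}(u_{i}^{k}, y^{k}, z^{k})$ reduces to $L_{x_{i}}(y^{k}, z^{k})$, and similarly $L_{x_{i}}'(u_{i}^{k}, y^{k}, z^{k}) = L_{x_{i}}'(y^{k}, z^{k})$, so the step-size prescription $\alpha_{i}^{k} = \alpha_{i} L_{x_{i}}'(y^{k}, z^{k})$ of Algorithm~\ref{a:PPHeBIE} matches the rule used in Subroutine~\ref{a:subroutine x}. The same argument applies block by block to the $y$-update using \eqref{e:nablay F separable} and Subroutine~\ref{a:subroutine y}. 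The $z$-update is identical in both algorithms. Thus $(x^{k+1}, y^{k+1}, z^{k+1})$ generated by Algorithm~\ref{a:PPHeBIE} equals the one generated by Algorithm~\ref{a:PHeBIE}.

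Next I would verify that Assumption~\ref{hyp recursive} is indeed in force along this common sequence. Items \eqref{hyp:1r}, \eqref{hyp:2r}, \eqref{hyp:4r} and \eqref{hyp:5r} are part of our hypothesis by assumption. For item \eqref{hyp:3r}, the bounds $\sup_{k} L_{x_{i}}(u_{i}^{k}, y^{k}, z^{k}) \leq \lambda_{x_{i}}^{+}$ and the analogue for $y_{j}$ simplify under Assumption~\ref{hyp parallel} to $\sup_{k} L_{x_{i}}(y^{k}, z^{k}) \leq \lambda_{x_{i}}^{+}$ and $\sup_{k} L_{y_{j}}(x^{k+1}, z^{k}) \leq \lambda_{y_{j}}^{+}$; these are the quantities that actually appear in Algorithm~\ref{a:PPHeBIE}, and boundedness of $\nabla F$'s partial Lipschitz moduli along the (bounded) iterates is guaranteed by \eqref{hyp:3r} combined with \eqref{hyp:4r}.

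With the two sequences identified and all assumptions of Theorem~\ref{T:Convergence gen} verified, both conclusions (finite length and convergence to a point satisfying \eqref{e:Fermat}) are inherited verbatim. The only mildly delicate point is making the base case and inductive step of the equivalence argument precise, since Subroutine~\ref{a:subroutine x} updates the $i$-th sub-block using the already updated sub-blocks $x_{1}^{k+1}, \ldots, x_{i-1}^{k+1}$ via $\zeta_{i}^{k}$; this is exactly where the separability hypothesis is doing its work, and it is the part I would write out most carefully. The remainder is a direct appeal to Theorem~\ref{T:Convergence gen}.
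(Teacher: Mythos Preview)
Your proposal is correct and follows essentially the same approach as the paper: both invoke Proposition~\ref{t:parallelization} to identify the parallel updates of Algorithm~\ref{a:PPHeBIE} with the sequential updates of Algorithm~\ref{a:PHeBIE}, and then appeal to the already-established multi-block convergence theory. The paper's own proof is only a one-line sketch (``follows by induction from the proof of the three-block case \ldots\ and Proposition~\ref{t:parallelization}''), whereas you spell out the reduction to Theorem~\ref{T:Convergence gen} more explicitly; the one minor caveat is that since the sub-block argmins may be multi-valued, the precise statement is that every sequence generated by Algorithm~\ref{a:PPHeBIE} is also a valid sequence for Algorithm~\ref{a:PHeBIE}, which is all that is needed.
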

	\begin{proof}[Proof sketch] 
		The proof of convergence of the parallel multi-block method follows 
		by induction from the proof of the three-block case detailed in
		Section~\ref{Sec:Analysis} and Proposition~\ref{t:parallelization}.  
    \end{proof}

\section{Implementation for Blind Ptychography} \label{Sec:AppPyt}
	We apply the above results to the ptychography problem described in 
	Section~\ref{SSec:Pyt} where the objective function $F$ is given by \eqref{e:F} and the 
	constraint set $C$ by \eqref{e:C}, \eqref{e:support} and \eqref{e:support-amplitude}. The 
	sets $X,Y\subset \comp^n$ decompose into the product of $n$ complex planes. More 
	precisely,  $X \to X_1\times\cdots\times X_n\subset (\comp)^n$ with
 	\begin{equation} \label{e:support i}
		X_i \equiv \begin{cases}
			\left\{x \in \comp~|~|x| \leq R~\right\}, & \mbox{ for } i \in \Ibb_{X}, \\
		    \{0\}, & \mbox{ otherwise},
		\end{cases}
	\end{equation}
	where, again, $\Ibb_{X}$ is the index set corresponding to the support of the probe beam 
	and $R$ is some given amplitude. Similarly, $Y \to Y_1\times\cdots\times Y_n\subset 
	(\comp)^n$ with 
	\begin{equation} \label{e:support-amplitude i}
		Y_i \equiv 
		\begin{cases}
			\left\{ y \in \comp~|~0 \leq \underline{\eta} \leq \left|y\right| \leq 
			\overline{\eta}\right\}, & \mbox{ for } i \in \Ibb_{Y},\\
			\{0\}, & \mbox{ otherwise},
		\end{cases}
	\end{equation}
	where the index set $\Ibb_{Y}$ is the index set for the support of the specimen, and 
	$\underline{\eta}/\overline{\eta}$ are given lower/upper bounds on the intensity of the 
	specimen. We begin by showing that in this setting Assumptions~\ref{hyp recursive} and 
	\ref{hyp parallel} hold. In the context of the more general theory, in Assumption 
	\ref{hyp recursive} for this application we have $P=Q=n$ and $p_i, q_i=2$ for $i=1,2,
	\dots,n$, where $n$ is the number of pixels, and $r=2mn$ where $m$ is the number of 
	images.  
	\begin{proposition}\label{t:Ptychography hypotheses}
	 Let  $F$ be defined by \eqref{e:F} and let the constraint sets $X$, $Y$ and $Z$ be 
	 defined by \eqref{e:C}. Then $F$, together with the constraints $X$, $Y$ and $Z$ 
	 satisfies Assumption~\ref{hyp parallel} and the iterates of Algorithm~\ref{a:PPHeBIE} 
	 satisfy Assumption~\ref{hyp recursive}. Hence Algorithm~\ref{a:PPHeBIE} applied to the 
	 ptychography problem converges to a critical point from any feasible starting point.  
	\end{proposition}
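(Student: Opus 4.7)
The plan is to verify each clause of Assumption~\ref{hyp recursive} and Assumption~\ref{hyp parallel} in turn, and then invoke Theorem~\ref{T:Convergence gen parallel}. The structural observation driving everything is that each shift $S_j$ is a coordinate permutation, so the $l$-th entry of $S_j(x)\odot y - z_j$ has the form $x_{\pi_j(l)} y_l - z_{j,l}$. Hence $F$ is a sum of quadratics in which every individual term couples only one entry of $x$ with one entry of $y$ and one entry of some $z_j$. Differentiating coordinatewise, the $i$-th component of $\nabla_x F(x,y,\mathbf{z})$ depends only on $x_i$ (together with the corresponding entries of $y$ and $\mathbf{z}$), and analogously $\nabla_{y_i} F$ depends only on $y_i$. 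This is exactly the separability required by Assumption~\ref{hyp parallel}\eqref{hyp parallel 1}--\eqref{hyp parallel 2}.

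Next I would verify Assumption~\ref{hyp recursive}. Clause~\eqref{hyp:1r} is immediate from the definitions \eqref{e:support i}, \eqref{e:support-amplitude i}, and \eqref{e:Mj}: each $X_i$ and $Y_i$ is the intersection of a closed disk (or annulus) in $\comp$ with a trivial subspace, and each $Z_j$ is the preimage under the unitary map $\FFF$ of a product of circles. For \eqref{hyp:2r}, $F$ is a polynomial, hence $C^\infty$, with $F\geq 0$ so $\inf F>-\infty$; the explicit formula for $\nabla_{x_i}F$ and $\nabla_{y_j}F$ computed above shows these partial gradients are affine in their active block, and therefore Lipschitz with moduli $L_{x_i}(u_i,y,z)$ and $L_{y_j}(x,v_j,z)$ given by explicit polynomial expressions in the remaining variables. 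For \eqref{hyp:3r}, on any bounded set $\nabla F$ is a polynomial map and hence Lipschitz; the uniform bounds $\lambda_{x_i}^+$, $\lambda_{y_j}^+$ then follow because, by the constraints \eqref{e:support i}--\eqref{e:support-amplitude i} and the modulus constraint defining $Z_j$, the sets $X$, $Y$, $Z$ are themselves bounded, so the partial Lipschitz moduli (being continuous in the fixed variables) attain a maximum on the closure of the feasible region. Clause~\eqref{hyp:4r} is then a free consequence of Remark~\ref{r:ptychography assumptions}\eqref{r:compact constraints}: Algorithm~\ref{a:PPHeBIE} is a feasible-point method, so the iterates stay in $X\times Y\times Z$, which has just been shown to be bounded. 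Finally, clause~\eqref{hyp:5r} is Proposition~\ref{t:Ptychography semialg}.

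With both assumptions verified, the conclusion of Theorem~\ref{T:Convergence gen parallel} gives that the sequence generated by Algorithm~\ref{a:PPHeBIE} has finite length and converges to a point satisfying the first-order condition~\eqref{e:Fermat}, which is precisely the desired statement.

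The only step I expect to require genuine care is the explicit computation of the partial gradients together with the uniform Lipschitz bound in~\eqref{hyp:3r}. One has to write down $\nabla_{x_i} F$ and $\nabla_{y_j} F$ using the permutation $\pi_j$ induced by $S_j$, check that each is affine in the active block with coefficients depending polynomially on the other blocks, and then bound those coefficients using $|y_l|\le\overline\eta$, $|x_i|\le R$, and $|z_{j,l}|=b_{jl}$. Everything else reduces to invoking results already proved in the paper.
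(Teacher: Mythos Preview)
Your proposal is correct and follows essentially the same route as the paper: verify Assumptions~\ref{hyp recursive} and~\ref{hyp parallel} item by item (closedness, differentiability and nonnegativity of $F$, boundedness of the feasible set hence of the iterates, Lipschitz continuity of $\nabla F$ on bounded sets, the KL property via Proposition~\ref{t:Ptychography semialg}), with the only substantive computation being the separability and Lipschitz structure of the partial gradients, and then invoke Theorem~\ref{T:Convergence gen parallel}. Your permutation-based explanation of separability is exactly the mechanism underlying the explicit formulas \eqref{e:nablax F ptychography splits}--\eqref{e:nablayi F ptychography modulus} that the paper derives in its appendix.
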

	\begin{proof}	
		There several items from Assumption  \ref{hyp recursive} that are trivial: 
		\begin{itemize}
			\item[]\eqref{hyp:1r} the constraints $X$, $Y$, and $Z$ are clearly nonempty and 
				closed;
			\item[]\eqref{hyp:2r} the objective function $F$ is differentiable;
			\item[]\eqref{hyp:4r}  the generated sequence is bounded since this is a 
			 	feasible point algorithm and the constraint set $C$ is bounded (see Remark 						\ref{r:ptychography assumptions}\eqref{r:compact constraints});
			\item[]\eqref{hyp:3r} the Lipschitz continuity of $\nabla F$ on bounded subsets 
				of $X \times Y \times Z$ follows immediately from the fact that $F$ is 
				$C^{2}$ and the fact that the generated sequence is bounded; 
			\item[]\eqref{hyp:5r} by Proposition~\ref{t:Ptychography semialg} the function 
				$\Psi$ (see \eqref{e:Psi}) is a KL function.
		\end{itemize}
		The only remaining parts needing verification are Lipschitz continuity of the partial 
		gradients in Assumption \ref{hyp recursive}\eqref{hyp:2r} and separability of the 
		gradients in Assumption~\ref{hyp parallel}. The technical details of this calculation 
		are left for an appendix where we show that (in a slight abuse of notation)  		
		\begin{eqnarray}
			\nabla_{x} F\left(x , y , z\right) &=& \left(\nabla_{x_1} F\left(x_1, y , 
			z\right), \nabla_{x_2} F\left(x_2, y, z\right), \dots, \nabla_{x_n} F\left(x_n, 
			y, z\right)\right), \label{e:nablax F ptychography splits}\\
			\nabla_{y} F\left(x , y , z\right) &=& \left(\nabla_{y_1} F\left(x, y_1, 
			z\right), \nabla_{y_2} F\left(x, y_2, z\right), \dots, \nabla_{y_n} F\left(x, 
			y_n, z\right)\right), \label{e:nablay F ptychography splits}
		\end{eqnarray}
		with respective moduli of continuity  
	 	\begin{eqnarray}
			L_{x_i}\left(y , z\right) &=& 2\left(\sum_{j = 1}^{m} 
			S_{j}^{\ast}\left(\overline{y} \odot y\right)\right)_i, \qquad i=1,2,\ldots, 
			n, \label{e:nablaxi F ptychography modulus} \\
	      	L_{y_i}\left(x , z\right) &=& 2\left({\sum_{j = 1}^{m} S_{j}\left(\overline{x} 
	      	\odot x\right)}\right)_i, \qquad i=1,2,\dots, n.
	      	\label{e:nablayi F ptychography modulus}		
	\end{eqnarray}
	Convergence of Algorithm \ref{a:PPHeBIE} applied to critical points of the ptychography 
	problem for any feasible initial guess then follows immediately from Theorem 
	\ref{t:parallelization}.
	\end{proof}
	We note that the partial gradients $\nabla_{x_i} F\left(x , y , z\right)$  (respectively 
	$\nabla_{y_i} F\left(x , y , z\right)$) are with respect to the real and imaginary parts 
	of $x_i\in\comp$ (respectively $y_i\in\comp$), or equivalently with respect to the two-
	dimensional real vectors $x_i\in\real^2$ (respectively $y_i\in\real^2$). So $\nabla_{x_i} 
	F\left(x , y , z\right)$  (respectively $\nabla_{y_i} F\left(x , y , z\right)$) are 
	actually mappings to vectors in $\real^2$ with moduli of Lipschitz continuity 
	$L_{x_i}\left(y , z\right)$ (respectively $L_{y_i}\left(x , z\right)$).
\medskip
	
	The regularization parameters can be determined explicitly by the modulus of Lipschitz 
	continuity of the gradient of $F$ with respect to the isolated blocks of variables $x$ 
	and $y$, respectively. More precisely, for $i=1,2, \dots, n$, we have
	\begin{eqnarray}
		\alpha^{k}_i &=& \alpha L_{x_i}\left(y^{k} , z^{k}\right) = \alpha\left(\sum_{j = 
		1}^{m} S_{j}^{\ast}\left(\overline{y^{k}} \odot y^{k}\right)\right)_i, 
		\label{e:alpha_i ptych}\\
		\beta^{k}_i &=& \beta L_{y_i}\left(x^{k + 1} , z^{k}\right) = \beta \left(\sum_{j = 
		1}^{m} S_{j}\left(\overline{x^{k + 1}} \odot x^{k + 1}\right)\right)_i,
		\label{e:beta_i ptych}
	\end{eqnarray} 
	where $\alpha , \beta > 1$ are arbitrary. 
\medskip

	In drawing the connections to other algorithms in the literature it is helpful to 
	recognize that Steps \eqref{a:PPHeBIE 1} and \eqref{a:PPHeBIE 2} of Algorithm 
	\eqref{a:PPHeBIE} are easily computed projections. Indeed, 
	\begin{align}
		x_i^{k + 1} & \in \argmin_{x_i \in X_i} \left\{ \act{x_i - x_i^{k} , \nabla_{x_i} 
		F\left(x_i^{k} , y^{k} , \bz^{k}\right)} + \frac{\alpha_i^{k}}{2}\norm{x_i - 
		x_i^{k}}^{2} \right\}	\nonumber\\
		& = \argmin_{x_i \in X_i} \left\{ \norm{x_i - \left(x_i^{k} - \frac{2}
		{\alpha_i^{k}}\sum_{j = 1}^{m} \left[\left(S_{j}^{\ast}\left(\overline{y^{k}} \odot 
		y^{k}\right)\right)_i \odot x_i^{k} - \left(S_{j}^{\ast}\left(\overline{y^{k}} \odot 
		z_{j}^{k}\right)\right)_i \right]\right)}^{2} \right\} \nonumber\\
		& = P_{X_i}\left(x_i^{k} - \frac{2}{\alpha_i^{k}}\sum_{j = 1}^{m} 
		\left[\left(S_{j}^{\ast}\left(\overline{y^{k}} \odot y^{k}\right)\right)_i \odot 
		x_i^{k} - \left(S_{j}^{\ast}\left(\overline{y^{k}} \odot z_{j}^{k}\right)\right)_i 
		\right]\right),\label{e:argmin-P_X}
	\end{align}
	where $P_{X_i}$ is the projection onto the constraint set $X_i$. Similarly
	\begin{align}
		y_i^{k + 1} & \in \argmin_{y_i \in Y_i} \left\{ \act{y_i - y_i^{k} , \nabla_{y_i} 
		F\left(x^{k + 1} , y_i^{k} , \bz^{k}\right)} + \frac{\beta_i^{k}}{2}\norm{y_i - 
		y_i^{k}}^{2} \right\} \nonumber\\
    	& = \argmin_{y_i \in Y_i} \left\{ \norm{y_i - \left(y_i^{k} - \frac{2}
    	{\beta_i^{k}}\sum_{j = 1}^{m}\left[\left(S_{j}\left(\overline{x^{k + 1}} \odot x^{k + 
    	1}\right)\right)_i \odot y_i^{k} - \left(S_{j}\left(\overline{x^{k + 1}}\right) \odot 
    	z_{j}^{k}\right)_i \right]\right)}^{2} \right\} \nonumber\\
    	& = P_{Y_i}\left(y_i^{k} - 	\frac{2}{\beta_i^{k}}\sum_{j = 1}^{m} 
    	\left[\left(S_{j}\left(\overline{x^{k + 1}} \odot x^{k + 1}\right)\right)_i \odot 
    	y_i^{k} - \left(S_{j}\left(\overline{x^{k + 1}}\right) \odot z_{j}^{k}\right)_i 
    	\right]\right),	\label{e:argmin-P_Y}	
	\end{align}
	where $P_{Y_i}$ is the projection onto the constraint set $Y_i$. The last step is also a 
	projection step given by
	\begin{align}
		\bz^{k + 1} & \in \argmin_{\bz \in Z} \left\{ F\left(x^{k + 1} , y^{k + 1} , 
		\bz\right) + \frac{\gamma}{2}\norm{\bz - \bz^{k}}^{2} \right\} \nonumber\\
		& = \argmin_{\bz \in Z} \left\{ \sum_{j = 1}^{m} \norm{\left(\frac{2}{2 + \gamma} 
		S_{j}\left(x^{k + 1}\right) \odot y^{k + 1} + \frac{\gamma}{2 + \gamma} 
		z_{j}^{k}\right) - z_{j}}^{2} \right\}\nonumber\\
		& = P_Z\left(\widetilde{\bz}^{k+1}\right), \label{e:argmin-P_Z}
	\end{align}
	where $\widetilde{\bz}^{k+1} \equiv \left(\widetilde{z_1}^{k+1}, \widetilde{z_2}^{k+1}, 
	\dots, \widetilde{z_m}^{k+1} \right)$ for 
	\begin{equation}\label{e:bztilde}
	 \widetilde{z_j}^{k+1}\equiv \frac{2}{2 + \gamma} 
		S_{j}\left(x^{k + 1}\right) \odot y^{k + 1} + \frac{\gamma}{2 + \gamma} 
		z_{j}^{k}, \quad j=1,2,\dots,m.
	\end{equation}
	Since $Z$ is separable, the projection can be written written as 
	\begin{equation}\label{e:PZbzitilde}
	P_Z(\widetilde{\bz}^{k+1})=P_{Z_1}(\widetilde{z_1}^{k+1})\times P_{Z_2}
	(\widetilde{z_2}^{k+1})\times \cdots\times P_{Z_m}(\widetilde{z_m}^{k+1}),
	\end{equation}
	so that $\bz^{k+1} = \left(z_1^{k+1}, z_2^{k+1}, \dots, z_m^{k+1} \right)$ where 
	\begin{equation}\label{e:PZ_iz_itilde}
		z_{j}^{k + 1} \in P_{Z_{j}}\left(\frac{2}{2 + \gamma}S_{j}\left(x^{k + 1}\right) 
		\odot y^{k + 1} + \frac{\gamma}{2 + \gamma}z_{j}^{k}\right), \quad j = 1 , 2 , 
		\ldots , m.
	\end{equation}
	For a given point $z \in \comp^{n}$ the projector onto the set $Z_{j}$ (see 
	\eqref{e:Mj}) is given by \cite{LBL2002}
	\begin{equation} \label{e:PM}
		P_{Z_{j}}\left(z\right) = \iF\left({\hat z}\right) \quad \mbox{ where, for some } 
		\theta \in \left(0 , 2\pi\right], \quad
		{\hat z_{k}} = \begin{cases}
			b_{jk}\frac{[\FFF\left(z\right)]_{k}}{\left|[\FFF\left(z\right)]_{k}\right|},		
			& \left|[\FFF\left(z\right)]_{k}\right| \neq 0, \\
			b_{jk}\mathrm{e}^{i\theta}, & \left|[\FFF\left(z\right)]_{k}\right| = 	0.
		\end{cases}
	\end{equation}
	We summarize this discussion with the following specialization of Algorithm~\ref{a:PBIE} 
	to the blind ptychography problem.
\vspace{0.2in}

    \fcolorbox{black}{Ivory2}{\parbox{17cm}{
		\begin{algorithm}[{\bf Ptychographic PHeBIE}]\label{a:PPHeBIE ptych}$~$\\
		    {\bf Initialization.} Choose $\alpha_{i} > 1$ and $\beta_{i} > 1$ ($i = 1 , 2 , 
		    \ldots , n$), $\gamma > 0$ and $\left(x^{0} , y^{0} , z^{0}\right) \in X \times Y 
		    \times Z$. \\
		    {\bf General Step ($k = 0 , 1 , \ldots$)}
			\begin{enumerate}[1.]
				\item\label{a:PPHeBIE ptych 1} For each $i = 1 , 2 , \ldots , n$, set 
				  $\alpha_{i}^{k} = \alpha_{i}\left(\sum_{j = 1}^{m} 
					S_{j}^{\ast}\left(\overline{y^{k}} \odot y^{k}\right)\right)_i$ and 
					select
			    	\begin{equation*} 
					  x_{i}^{k + 1} \in  P_{X_i}\left(x_i^{k} - \frac{2}{\alpha_i^{k}}\sum_{j 
					  = 1}^{m} \left[S_{j}^{\ast}\left(\overline{y^{k}} \odot y^{k}\right)_i 
					  \odot x_i^{k} - S_{j}^{\ast}\left(\overline{y^{k}} \odot 
					  z_{j}^{k}\right)_i \right]\right).
				    \end{equation*}	
				\item\label{a:PPHeBIE ptych 2} For each $i = 1 , 2 , \ldots , n$, set 
				    $\beta_{i}^{k} = \beta_{i} \left(\sum_{j = 1}^{m} 
				    S_{j}\left(\overline{x^{k + 1}} \odot x^{k + 1}\right)\right)_i$
				    and select
					\begin{equation*} 
						y_{i}^{k + 1} \in P_{Y_i}\left(y_i^{k} - 	\frac{2}
						{\beta_i^{k}}\sum_{j = 1}^{m} 	      
						\left[\left(S_{j}\left(\overline{x^{k + 1}} \odot x^{k + 
						1}\right)\right)_i \odot y_i^{k} - 					
						\left(S_{j}\left(\overline{x^{k + 1}}\right) \odot z_{j}^{k}\right)_i 
						\right]\right).
					\end{equation*}	
				\item\label{a:PPHeBIE ptych 3} For each $j=1,2,\dots,m$ select
					\begin{equation*} 
						z_j^{k + 1} \in P_{Z_{j}}\left(\frac{2}{2 + \gamma}S_{j}\left(x^{k + 
						1}\right) \odot y^{k + 1} + \frac{\gamma}{2 + 
						\gamma}z_{j}^{k}\right).
					\end{equation*}
			\end{enumerate}
		\end{algorithm}}}
\vspace{0.2in}

	Convergence of Algorithm~\ref{a:PPHeBIE ptych} to critical points has already been 	
	established in Proposition \ref{t:Ptychography hypotheses}.
	
\subsection{Relation to Current State-of-the-Art Algorithms}
	It is helpful to see Algorithm~\ref{a:PPHeBIE ptych} in the context of two other blind 
	ptychographic reconstruction algorithms, popular in the literature, namely, the methods 
	of Thibault \etal \cite{TDBMP2009}, and {Maiden and Rodenburg \cite{MR2009}. We show 
	that these algorithms should not be expected to converge in general to a fixed point.  
	However, the connection to Algorithm~\ref{a:PPHeBIE ptych} and the attendant analysis 
	immediately suggests how the methods of Thibault and Maiden and Rodenburg can be adjusted 
	for greater stability or speed (or both). On the other hand, understanding these two 
	methods in the context of the more general Algorithm~\ref{a:PPHeBIE} points the way to 
	different constructions and compositions of the three basic steps of either Algorithm 
	\ref{a:PBIE} or \ref{a:PPHeBIE} for more efficient procedures.  The analysis of such 
	variants would then follow along the lines of the analytical methodology presented here.  

\subsubsection{Thibault \etal \cite{TDBMP2009}}\label{s:Thibault}
	In order to explain the scheme suggested in \cite{TDBMP2009} we first recall the 
	definition of 
	\begin{equation} \label{eq:Thibauldset1}
		Z \equiv Z_{1} \times Z_{2} \times \cdots \times Z_{m} \subset \comp^{n \times m}.
	\end{equation}	 
	Define the set $D$ on the product space $\comp^{n \times m}$:
	\begin{equation} \label{eq:Thibauldset2}
		D \equiv D_{1} \times D_{2} \times \cdots \times D_{m} \subset \comp^{n \times m},	
	\end{equation}
	where
	\begin{equation}
		D_{j} \equiv \left\{ z_j~|~z_j=S_{j}\left(x\right) \odot y\mbox{ for some } x , y \in 
		\comp^{n} \right\}.
	\end{equation}
	If it were possible to compute the projection onto the set $D$ (no closed form exists), 
	then the Douglas--Rachford algorithm \cite{DougRach56,LionsMercier,BCL2004} could be 
	applied to solve the feasibility problem:  
	\begin{equation*}
  		\text{Find }\overline{x} \in D \cap Z.
	\end{equation*}
	More precisely, we have the following algorithm.
\vspace{0.2in}

    \fcolorbox{black}{Ivory2}{\parbox{17cm}{
	\begin{algorithm}[{\bf Douglas--Rachford for ptychography}] \label{a:DR ptych}$~$\\
    {\bf Initialization.} $\left(x^{0} , y^{0} , \bz^{0}\right) \in X \times Y \times Z$. \\
    {\bf General Step ($k = 0 , 1 , \ldots$)}
	\begin{enumerate}[1.]
		\item\label{a:DR ptych 1} Select an approximation $\bf{v}^k$ to some element from $ 	
			P_D \bz^k$.
		\item\label{a:DR ptych 2} Select
			\begin{equation*} 
				\hat\bz^{k + 1} \in P_{Z}\left(2 \bf{v}^k-\bz^k \right).
			\end{equation*}
		\item\label{a:DR ptych 3} Set
			\begin{equation} \label{e:ThibaultDM}
				\bz^{k + 1} = \bz^{k}+ \hat\bz^{k +1} -\bf{v}^k.
			\end{equation}
	\end{enumerate}
	\end{algorithm}}}
\vspace{0.2in}
%
%

	As noted above, no closed form exists for the projection onto the set $D$.  The method of 
	\cite{TDBMP2009} is an approximate Douglas--Rachford algorithm for set-feasibility with 
	the following subroutine serving as an approximation to some element from the projector 
	$P_D$. We describe the subroutine below as an approximation to the projector, however 
	there has been no analysis to estimate exactly how good, or in what sense, it is an 
	approximation, hence the qualifier {\em heuristic}.  
\vspace{0.2in}

  \fcolorbox{black}{Ivory2}{\parbox{17cm}{
		\begin{subroutine}[{\bf Heuristic approximation to $P_D$}] 
		\label{a:subroutinePD}$~$\\
		    {\bf Input.} $x^{k}\in\comp^n,y^{k}\in\comp^n,\bz^{k}\in\comp^{n\times m},
		    	\Lambda\in\{1,2,3,\dots\}$\\
		    {\bf Initialization.} Define $\hat x^{0}\equiv x^{k}$, $\hat y^{0}\equiv y^{k}$.
		    \\
		    {\bf General Step ($l = 0 , 1 , \ldots,\Lambda $).}
			\begin{enumerate}[1.]
			\item\label{a:subroutinePD 1} Define $\alpha^l\in\real^m$ by
				\[\alpha_{i}^l\equiv L_{x_i}(\hat y^{l},z^k)=
				2\left(\sum_{j = 1}^{m}S_{j}^{*}\left(\overline{\hat y^{l}}\odot \hat 
				y^{l}\right)\right)_i,\quad\text{for }i=1,2,\dots,m, \]
				and update $\hat x^{l + 1}$ by
				\begin{align*}
					\hat x_i^{l+1} & =\frac{2}{ \alpha_{i}^l}\left(\sum_{j = 1}^{m} 
					S_{j}^{\ast}\left(\overline{\hat y^{l}} \odot z_{j}^{k}\right)\right)_i.
				\end{align*}
			\item\label{a:subroutinePD 2} Define $\beta^l\in\real^m$ by
				\[
				\beta_{i}^l\equiv L_{y_i}(\hat x^{l},z^k)=
				2\left({\sum_{j = 1}^{m} S_{j}\left(\overline{\hat x^{l}} \odot \hat 
				x^{l}\right)}\right)_i,\quad\text{for }i=1,2,\dots,m, \]
				and update $\hat y^{k + 1}$ by
				\begin{align*}
					\hat y_i^{l+1} & = \frac{2}{ \beta_{i}^l}\left({\sum_{j = 1}^{m} 
					S_{j}\left(\overline{\hat x^{l}}\right)\odot z_{j}^{k}}\right)_i.
				\end{align*}
			\end{enumerate}
		    {\bf Final Step.}	Define $x^{k+1}\equiv \hat x^{\Lambda+1}$, $y^{k+1}\equiv \hat 
		    y^{\Lambda+1}$ and set
			\begin{equation}\label{e:ThibauldPD}
				\widetilde{\bf{v}}^{k+1} \equiv \left(S_{1}\left(x^{k + 1}\right) \odot y^{k 
				+ 1},\quad\cdots\quad, S_{m}\left(x^{k + 1}\right) \odot y^{k + 1}\right).
		\end{equation}
		\end{subroutine}}}
\vspace{0.2in}

	The method of \cite{TDBMP2009} is Algorithm~\ref{a:DR ptych} with Step \ref{a:DR ptych 1} 
	replaced with the computation of $\widetilde{\bf{v}}^k$ via Subroutine 
	\ref{a:subroutinePD}. Subroutine \ref{a:subroutinePD}, in turn,  can be cast within our 
	framework. Step \ref{a:PPHeBIE ptych 1} (respectively Step \ref{a:PPHeBIE ptych 2}) of 
	Algorithm \eqref{a:PPHeBIE ptych} with $X_i=\comp$ (respectively $Y_i=\comp$) for each 
	$i=1,2,\dots, n$ is equivalent to Step \ref{a:subroutinePD 1} (respectively Step 
	\ref{a:subroutinePD 2}) of Subroutine \ref{a:subroutinePD}.  
	\begin{remark} \label{r:Thibault}
		Some further remarks on the method of Thibault \etal are in order.
		\begin{enumerate}[(i)]
		    \item\label{r:Thibault shadows} In an implementation of Thibault \etal, one 
		    	monitors $x^{k}$ and $y^{k}$ (\ie the object and illumination function) 
		    	rather than the iterate $\bz^{k}$ itself. Since $x^{k}$ and $y^{k}$ are 
		    	obtained during the computation of the so called \emph{shadow iterates}, 
		    	$P_D\bz^k$, this can be interpreted as implicit monitoring of the 
		    	\emph{shadow sequence}, $\left(P_D\bz^k\right)_{k=1}^\infty$.
	        \item\label{r:Thibault instability} The Douglas--Rachford methods is known to be 
	        	sensitive to small perturbations in the constraint sets. In particular, if 
	        	the intersection $D\cap Z$ is empty (not at all an improbable event with 
	        	noisy, miss-specified data), then the Douglas--Rachford {\em cannot} converge 
	        	\cite{BCL2004}. The relaxation of the Douglas--Rachford algorithm studied in 
	        	\cite{RAAR, L2008} is one possibility for addressing this.
		\end{enumerate}
	\end{remark}

\subsubsection{Maiden and Rodenburg \cite{MR2009}}\label{s:Rodenburg}	
    In comparison to the other algorithms presented, the distinctive feature of the
    method of Maiden and Rodenburg \cite{MR2009} is that only a single magnitude measurement
    in used in each step. Their method can be described as follows.
\vspace{0.2in}

    \fcolorbox{black}{Ivory2}{\parbox{17cm}{
		\begin{algorithm}[{\bf Maiden and Rodenburg}]\label{a:PPHeBIE-Rodenburg}$~$\\
		    {\bf Initialization.} Choose $\alpha_{i}=\alpha \geq 2$ and $\beta_{i}=\beta \geq 
			    2$ ($i = 1 , 2 , \ldots , n$). Fix the mapping $\mathbb{I}:\mathbb{N}\mapsto 
			    \{1,2,\dots,m\}$ where the cardinality of the preimage of any $j\in \{1,2,
			    \dots,m\}$ is infinite. Choose $\left(x^{0}, y^{0}, z^{0}\right) \in X \times 
			    Y \times Z_{\mathbb{I}(0)}$. \\
		    {\bf General Step ($k = 0 , 1 , \ldots$)}
			\begin{enumerate}[1.]
				\item\label{a:PPHeBIE-Rodenburg 1} Set $\alpha^{k} = \alpha\left\|\sum_{j = 	
					1}^{m} S_{j}^{\ast}\left(\overline{y^{k}} \odot y^{k}\right)\right\|
					_\infty$ and, for each $i = 1 , 2 , \ldots , n$, select
			    	\begin{equation*} 
						x_{i}^{k + 1} \in  P_{X_i}\left(x_i^{k} - \frac{2}{\alpha^{k}}\sum_{j 
						= 1}^{m} \left[S_{j}^{\ast}\left(\overline{y^{k}} \odot 
						y^{k}\right)_i \odot x_i^{k} - S_{j}^{\ast}\left(\overline{y^{k}} 
						\odot z^{k}\right)_i \right]\right).
			    \end{equation*}	
			\item\label{a:PPHeBIE-Rodenburg 2} Set $\beta^{k} = \beta\left\|\sum_{j = 1}^{m} 
				    S_{j}\left(\overline{x^{k}} \odot x^{k}\right)\right\|_\infty$ and, for 
				    each $i = 1 , 2 , \ldots , n$,  select
					\begin{equation*} 
						y_{i}^{k + 1} \in P_{Y_i}\left(y_i^{k} - \frac{2}{\beta^{k}}\sum_{j 
						= 1}^{m} \left[\left(S_{j}\left(\overline{x^{k}} \odot 
						x^{k}\right)\right)_i \odot y_i^{k} - 
						\left(S_{j}\left(\overline{x^{k}}\right) \odot z^{k}\right)_i 
						\right]\right).
					\end{equation*}	
			\item\label{a:PPHeBIE-Rodenburg 3} Select
					\begin{equation*} 
						z^{k + 1} \in P_{Z_{\mathbb{I}(k+1)}}\left(S_{j}\left(x^{k + 
						1}\right) \odot y^{k + 1}\right).
					\end{equation*}
			\end{enumerate}
		\end{algorithm}}}
\vspace{0.2in}


	\begin{remark}\label{r:Rodenburg}
		In the context of Algorithm \ref{a:PPHeBIE ptych} several features of Algorithm 
		\ref{a:PPHeBIE-Rodenburg} are worth noting. 
		\begin{enumerate}[(i)]
			\item As established in Sections \ref{Sec:Convergence}  and \ref{Sec:AppPyt}, the 
				scalings $\alpha^k$ and $\beta^k$ in  Steps \ref{a:PPHeBIE-Rodenburg 1} and 
				\ref{a:PPHeBIE-Rodenburg 2} of Algorithm~\ref{a:PPHeBIE-Rodenburg} are 
				Lipschitz constants of the partial gradient of $F$ defined by \eqref{e:F}  on 
				the {\em entire} $x$ and $y$ blocks. This could be refined by using the 
				scalings $\alpha_i^k$ given in Steps \ref{a:PPHeBIE ptych 1} and 
				\ref{a:PPHeBIE ptych 2} of Algorithm \ref{a:PPHeBIE ptych}. 
			\item Steps \ref{a:PPHeBIE-Rodenburg 1} and \ref{a:PPHeBIE-Rodenburg 2} of 
				Algorithm~\ref{a:PPHeBIE-Rodenburg}can be performed in {\em parallel} since 
				the $y$ update does not use information from the $x$ update as in Algorithm 	
				\ref{a:PPHeBIE ptych}. 
			\item As Algorithm \ref{a:PPHeBIE-Rodenburg} is essentially a cyclic projection 
				algorithm, in practice one should expect the iterates to cycle. 
		\end{enumerate}
	\end{remark}

\section{Numerical Examples} \label{Sec:Implementing}
	To illustrate the differences between the various algorithms developed above, in Section 
	\ref{s:synthetic} we compare algorithm performance on synthetic data where the problem 
	``difficulty'' is relatively well controlled (and the answer, shown in  
	Figure~\ref{f:Gaens}, is known) and in Section \ref{s:experimental} we compare algorithm 
	performance on experimental data reported in \cite{Wilke}. In both the synthetic and 
	experimental demonstrations we compare different algorithms: 
	\begin{enumerate}
  		\item PHeBIE-I: Algorithm~\ref{a:PBIE} specialized to ptychography with $\gamma = 
  			1\mbox{e-}30$.
	  	\item PHeBIE-II: Algorithm~\ref{a:PPHeBIE ptych} with $\gamma = 1\mbox{e-}30$.
		\item Thibault \cite{TDBMP2009}: Algorithm~\ref{a:DR ptych} with Step \ref{a:DR ptych 
			1} computed via Suboroutine~\ref{a:subroutinePD} with $\Lambda = 3$.
		\item Maiden and Rodenburg~\cite{MR2009}: Algorithm~\ref{a:PPHeBIE-Rodenburg}.
	\end{enumerate}

\subsection{Synthetic data}\label{s:synthetic}
	\begin{figure*}[h]
		\begin{center}
			\includegraphics[height=4.5cm]{./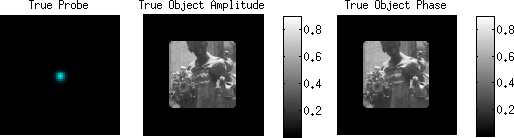}
		\end{center}
		\caption{The true probe and object used in the generation of simulated dataset.} 	
		\label{f:Gaens}
	\end{figure*}

%
%
	Let $x$ (respectively $y$) denote the true probe (true object). For the noiseless 
	simulated data, we compute the measured data vectors using 
	$$b_j=|\FFF(S_j(x)\odot y)|\text{ for }j=1,2,\dots,m.$$
	For the simulated data with noise, we use Poisson noise with mean/variance $\lambda=2$.
\medskip

In typical ptychography experiments 
one more or less knows a priori what the probe looks like, though its precise structure, due to instrumentation 
aberrations, is unknown.
The object, on the other hand, is assumed to be completely unknown except for certain qualitative properties, for example,
that it is not absorbing.
For the simulated data, the initial probe estimate consists
of a circle of radius slightly larger than the true probe having constant amplitude and phase. 
Objects are initialized with a random initial guess. 
We demonstrate the stability of the algorithms in the results shown in Table \ref{t:gaensPupil} by purposely 
constraining  the pupil to be smaller than the true pupil.  This is not an unreasonable scenario since in practice
the true pupil is not known.  

Consistent with existing literature, we run several iterations of each algorithm 
without updating the probe to obtain a better initial object guess. The results of this ``warm-up" 
procedure are then used as initial point 
$(x^0,y^0,\mathbf z^0)$ for the main algorithm of which 300 iterations were performed. 
Experimentally, the ``warm-up" procedure could also be accomplished with an ``empty'' beam data set consisting of 
beam images taken without specimen.  
\medskip

Where convenient, we use $u^k$ to denote $(x^k,y^k,\mathbf z^k)$. Random trials of each problem instance 
were performed with random object initializations. Tables~\ref{t:gaensNoNoise}, \ref{t:gaensPoisson} and \ref{t:gaensPupil} report the average, 
and in brackets, the worst result for the following statistics. 
 \begin{enumerate}[1.]
  \item The final value of the least-squares objective given by \eqref{e:F}.
  \item The square of the norm of the change between the final two iterations, \ie $\|u^{300}-u^{299}\|^2$. 
  \item The Root-mean-squared error of the final object and probe as described in \cite{guizar2008efficient}. 
The error is computed up to translation, a global phase shift and a global scaling factor.\footnote{Computed 
using code written Mauel Guizar available online at 
\url{http://www.mathworks.com/matlabcentral/fileexchange/18401-efficient-subpixel-image-registration-by-cross-correlation}}
  \item The \emph{$R$-factor} at iteration $300$, where 
    \begin{equation}\label{e:R-factor}
R\text{-factor}^k=\frac{\sum_{j=1}^m\|b_j-S_j(x^{k})\odot y^{k}\|}{\sum_{j=1}^mb_j}.       
    \end{equation}
  As in \eqref{e:Mj}, $b_j$ denotes the experimental observations.
  \item The total time (seconds) for the ``warm-up" and main algorithm.
 \end{enumerate}

\begin{remark}[Error metrics]\label{r:errorMetrics}
Theorem~\ref{T:Convergence gen}(a) guarantees that the difference between the iterates of 
Algorithm \ref{a:PBIE} and Algorithm \ref{a:PPHeBIE} converge in norm to zero. 
To compute the RMS-error a knowledge of the true object 
and probe are required, which in real applications are not known. The $R$-factor can still be evaluated 
in experimental settings (see Figure \ref{f:comp convergence exp}) and used as a measure of quality of 
the reconstruction, though the theoretical behavior of this metric is not covered by our analysis.  
\end{remark}

\begin{table}[htbp]
 \caption{Average (worst) results for noiseless simulated data.}\label{t:gaensNoNoise}
 \vspace{5pt}
 \resizebox{\textwidth}{!}{
  \begin{tabular}{l
                  D{.}{.}{4.2}@{\extracolsep{0.5ex}}D{.}{.}{5.2}
                  @{\extracolsep{5ex}}D{.}{.}{2.4}@{\extracolsep{0.5ex}}D{.}{.}{3.4}
                  *{3}{@{\extracolsep{5ex}}D{.}{.}{1.4}@{\extracolsep{1ex}}D{.}{.}{2.4}}
                  @{\extracolsep{5ex}}D{.}{.}{3.2}@{\extracolsep{0ex}}D{.}{.}{5.2}@{\hspace{1.5ex}}
                  } \hline
   Algorithm                   & \multicolumn{2}{c}{$F(u^{300})$}   & \multicolumn{2}{c}{$\|u^{300}-u^{299}\|^2$} & \multicolumn{2}{c}{RMS-Object}       & \multicolumn{2}{c}{RMS-Probe}        & \multicolumn{2}{c}{R-factor$^{300}$}         & \multicolumn{2}{c}{Time (s)}          \\ \hline
PHeBIE-I 	&  99.63 & (126.64) 	&  0.5931 & (0.9383) 	&  0.0410 & (0.0461) 	&  0.0155 & (0.0222) 	&  0.0131 & (0.0154) 	&  913.75 & (925.85) \\
PHeBIE-II 	&  70.76 & (77.17) 	&  0.2210 & (0.3522) 	&  0.0423 & (0.0471) 	&  0.0081 & (0.0154) 	&  0.0101 & (0.0108) 	&  636.74 & (652.17) \\
Rodenburg \& Madien 	&  948.13 & (1499.11) 	&  5.5164 & (8.4885) 	&  0.0542 & (0.0590) 	&  0.0952 & (0.1714) 	&  0.0350 & (0.0419) 	&  1178.21 & (1198.72) \\
Thibault 	&  4347.08 & (4554.28) 	&  28.8622 & (34.4422) 	&  0.0515 & (0.0642) 	&  0.0240 & (0.0378) 	&  0.0244 & (0.0264) 	&  875.94 & (887.76) \\
  \hline
  \end{tabular}
 }\\
\end{table}

\begin{table}[htbp]
 \caption{Average (worst) results for simulated data with Poisson noise.}\label{t:gaensPoisson}
 \vspace{5pt}
 \resizebox{\textwidth}{!}{
  \begin{tabular}{l
                  D{e}{e}{5.3}@{\extracolsep{2ex}}D{e}{e}{6.3}
                  @{\extracolsep{5ex}}D{.}{.}{5.4}@{\extracolsep{0.5ex}}D{.}{.}{6.4}
                  *{3}{@{\extracolsep{5ex}}D{.}{.}{1.4}@{\extracolsep{0ex}}D{.}{.}{2.4}}
                  @{\extracolsep{5ex}}D{.}{.}{3.2}@{\extracolsep{0ex}}D{.}{.}{5.2}@{\hspace{1.5ex}}
                  } \hline
   Algorithm                   & \multicolumn{2}{c}{$F(u^{300})$}   & \multicolumn{2}{c}{$\|u^{300}-u^{299}\|^2$} & \multicolumn{2}{c}{RMS-Object}       & \multicolumn{2}{c}{RMS-Probe}        & \multicolumn{2}{c}{R-factor$^{300}$}         & \multicolumn{2}{c}{Time (s)}          \\ \hline
PHeBIE-I 	&  1.4415e+07 & (6.9222e+07) 	&  4.1504 & (14.0823) 	&  0.1928 & (0.6840) 	&  0.1896 & (0.7084) 	&  0.3499 & (1.2698) 	&  899.30 & (933.54) \\
PHeBIE-II 	&  1.4364e+07 & (6.8972e+07) 	&  521.9450 & (2600.9689) 	&  0.2807 & (0.9940) 	&  0.2537 & (0.9746) 	&  0.4001 & (1.5135) 	&  685.67 & (714.21) \\
Rodenburg \& Madien	&  6.7894e+04 & (3.1414e+05) 	&  14633.8000 &(61868.3743) &  0.2654 & (0.9996) 	&  0.3205 & (0.9507) 	&  0.3827 & (1.2814) 	&  1168.36 & (1177.71) \\
Thibault 	&  1.4520e+07 & (6.9688e+07) 	&  247.3130 & (976.2039) 	&  0.2476 & (1.0000) 	&  0.0700 & (0.2498) 	&  0.1748 & (0.5686) 	&  868.07 & (892.19) \\
  \hline
  \end{tabular}
 }\\
\end{table}

\begin{table}[htbp]
 \caption{Average (worst) results for noiseless simulated data with over-restrictive pupil constraint.}\label{t:gaensPupil}
 \vspace{5pt}
 \resizebox{\textwidth}{!}{
  \begin{tabular}{l
                  D{.}{.}{6.2}@{\extracolsep{0.5ex}}D{.}{.}{7.2}
                  @{\extracolsep{5ex}}D{.}{.}{3.4}@{\extracolsep{0.5ex}}D{.}{.}{4.4}
                  *{3}{@{\extracolsep{5ex}}D{.}{.}{1.4}@{\extracolsep{0ex}}D{.}{.}{2.4}}
                  @{\extracolsep{5ex}}D{.}{.}{3.2}@{\extracolsep{0ex}}D{.}{.}{5.2}@{\hspace{1.5ex}}
                  } \hline
   Algorithm                   & \multicolumn{2}{c}{$F(u^{300})$}   & \multicolumn{2}{c}{$\|u^{300}-u^{299}\|^2$} & \multicolumn{2}{c}{RMS-Object}       & \multicolumn{2}{c}{RMS-Probe}        & \multicolumn{2}{c}{R-factor$^{300}$}         & \multicolumn{2}{c}{Time (s)}          \\ \hline
PHeBIE-I 	&  25653.20 & (25656.12) 	&  0.1474 & (0.1682) 	&  0.0443 & (0.0501) 	&  0.0492 & (0.0494) 	&  0.2936 & (0.2937) 	&  959.26 & (1108.53) \\
PHeBIE-II 	&  25653.80 & (25660.13) 	&  0.0622 & (0.0852) 	&  0.0314 & (0.0356) 	&  0.0496 & (0.0499) 	&  0.2936 & (0.2937) 	&  632.61 & (645.10) \\
Rodenburg \& Madien	&  3987.67 & (4413.77) 	&  6.2921 & (16.3055) 	&  0.0689 & (0.0760) 	&  0.0550 & (0.0570) 	&  0.2834 & (0.2839) 	&  1190.19 & (1309.23) \\
Thibault 	&  306602.00 & (378554.38) 	&  219.2110 & (267.9303) 	&  0.9432 & (0.9437) 	&  0.1898 & (0.2435) 	&  0.3634 & (0.39160) 	&  897.67 & (962.45) \\
  \hline
  \end{tabular}
 }\\
\end{table}

For the noiseless simulated dataset, the quality of the reconstructed object and probe from
each of the methods examined are comparable. This applies to the quantitative error
metrics recorded in Table~\ref{t:gaensNoNoise}, as well as to a visual comparison of  
reconstructions (not shown). In the absence of noise, all the methods
examined worked well.  
It is worth noting, that Algorithm \ref{a:PPHeBIE ptych} was significantly faster than all the other methods. 
\medskip

With the addition of Poisson noise, the quality of the reconstructed
objects and probes deteriorates. The error metrics are mixed, and no clear ``winner'' 
emerges from the values reported in Table~\ref{t:gaensPoisson}.
The method of Thibault could be expected to 
be more unstable since it is based on the Douglas--Rachford algorithm, it has the advantage of pushing 
past local minima that might otherwise trap Algorithm \ref{a:PPHeBIE}.  
\medskip

The results for a improperly specified pupil constraint (too small) demonstrate the relative stability of the respective 
methods.  The method of Thibault \etal was the most sensitive to the over-restrictive pupil 
constraint and performed the worst. This is expected since the modeling errors lead to {\em inconsistency} of the 
underlying feasibility problem:  it is well known that Douglas--Rachford does not have fixed points for 
inconsistent feasibility problems \cite{BCL2004}. Visually, the method of Thibault was not able to recover any 
semblance of the true solution.   Algorithms \ref{a:PHeBIE} and \ref{a:PPHeBIE} are clearly more robust.  Visual comparisons also bear this out.
\medskip

Tables~\ref{t:gaensNoNoise}, \ref{t:gaensPoisson} and \ref{t:gaensPupil} suggest that is it not appropriate to 
compare the final objective value and stepsize of the various algorithms directly. Significant variability 
was exhibited in these metrics between the methods, despite all recording having similar 
RMS and $R$-factor errors in the ideal case of noiseless data. 

\subsection{Experimental Data}\label{s:experimental}
In this section we examine the four algorithms applied to an experimental data set (from \cite{Wilke}) 
in which the actual illumination function and specimen are unknown. 
The reconstructed illumination functions and specimens obtained from the four algorithms are shown in 
Figure~\ref{f:comp recon exp}. By visual inspection, the reconstructions are of comparable quality, 
with the exception of the results from method of Madien and Rodenburg, which is of noticeably poorer quality.
\medskip

In Figure~\ref{f:comp convergence exp} compare two error metrics as a function of number of iterations for the four 
algorithms. The first graph, Figure~\ref{f:comp convergence exp}(a)  shows the norm of the difference of 
successive iterates, which, for PHeBIE, is 
the only quantity guaranteed to converge to zero by the theory we have developed above.  The second graph, 
Figure~\ref{f:comp convergence exp}(b) shows the $R$-factor which, as discussed in Remark~\ref{r:errorMetrics} is 
computable in experimental settings. The best performance, with respect to both of these metrics, were observed 
for the fully decomposed parallel PHeBIE-II (Algorithm \ref{a:PPHeBIE ptych}). 
We do not make any direct comparison with the reconstructions in \cite{Wilke}, 
however, because there the authors implement routines beyond the scope of our theory. 
A more complete benchmarking study on experimental data is forthcoming.
\begin{figure}[h]
  \centering
  \begin{subfigure}[b]{\linewidth} 
     \centering
     \includegraphics[width=15cm, height=7cm]{./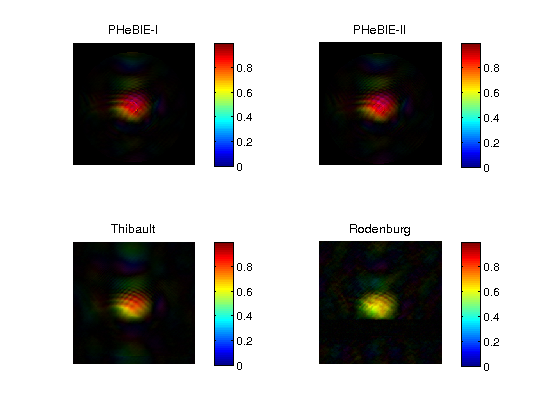}
     \caption{Reconstructed probes.}
  \end{subfigure}
  \begin{subfigure}[b]{\linewidth} 
     \centering
     \includegraphics[width=15cm, height=7cm]{./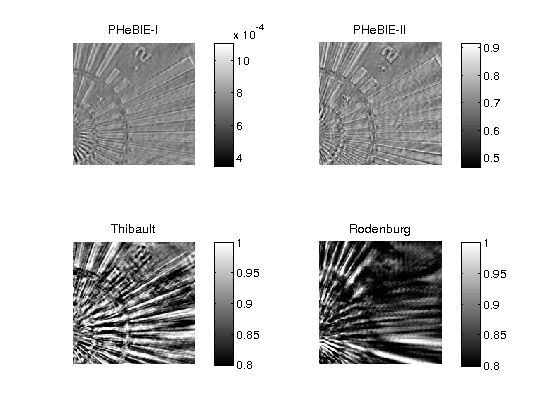}
     \caption{Reconstructed specimen amplitudes.}
  \end{subfigure}
  \begin{subfigure}[b]{\linewidth} 
     \centering
     \includegraphics[width=15cm, height=7cm]{./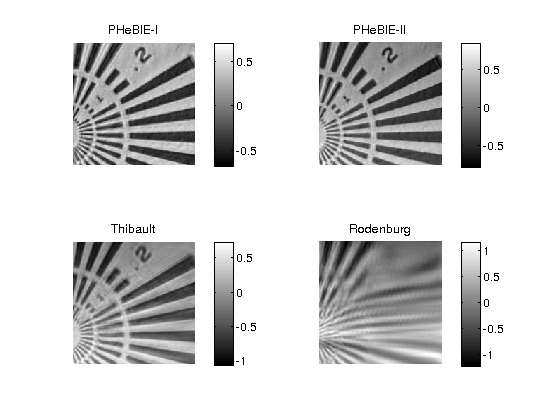}
     \caption{Reconstructed specimen phases.}
  \end{subfigure}
  \caption{Results for the experimental dataset for the four different algorithms.}\label{f:comp recon exp}
\end{figure}

\begin{figure}[h]
  \centering
  \begin{subfigure}[b]{\linewidth}
     \centering
     \includegraphics[width=0.9\linewidth]{./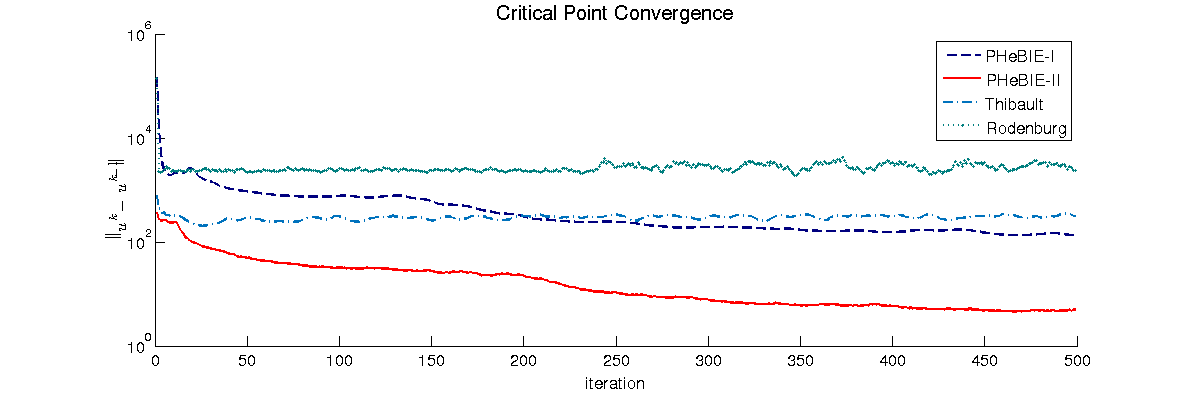}
     \caption{The norm of the differences between successive iterates.}
  \end{subfigure}
  
  \medskip
  
  \begin{subfigure}[b]{\linewidth}
     \centering
     \includegraphics[width=0.9\linewidth]{./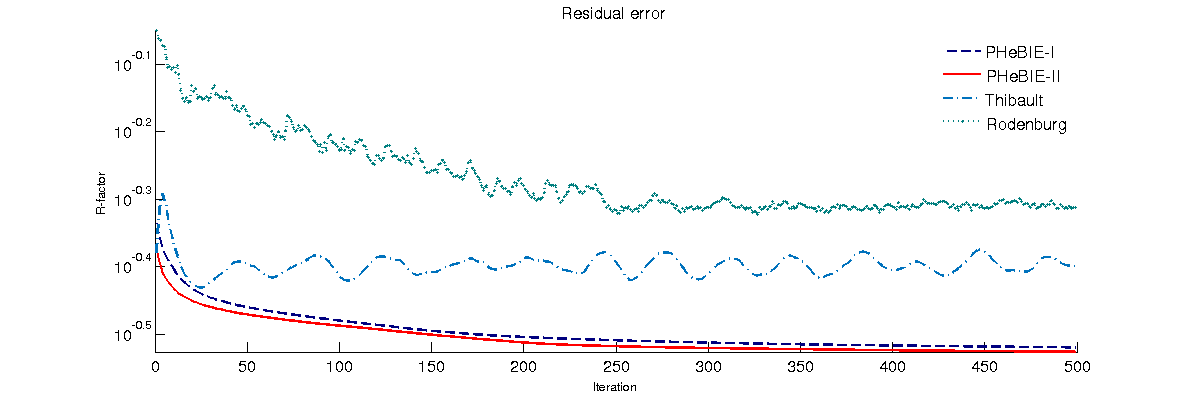}
     \caption{The $R$-factor of the iterates defined by \eqref{e:R-factor}.}
  \end{subfigure}
 \caption{Performance profiles for the four algorithms applied to experimental data.}\label{f:comp convergence exp}
\end{figure}

\section{Appendix} 

\subsection{Appendix A: Proof of Equations \eqref{e:nablax F ptychography splits}-\eqref{e:nablayi F ptychography modulus}}
	First, we compute the partial gradient of both functions. 
	\begin{eqnarray}
		\nabla_{x} F\left(x	, y , {\bf z}\right) & = & 2\sum_{j = 1}^{m} 
		\left[S_{j}\left(\cdot\right) \odot y\right]^{\ast}\left(S_{j}\left(x\right) \odot y 
		- z_{j}\right) = 2\sum_{j = 1}^{m} S_{j}^{\ast}\left(\overline{y} \odot 
		\left(S_{j}\left(x\right) \odot y - z_{j}\right)\right) \nonumber \\
		& = & 2\sum_{j = 1}^{m} \left[S_{j}^{\ast}\left(\overline{y} \odot y\right) \odot 
		x - S_{j}^{\ast}\left(\overline{y} \odot z_{j}\right) \right] 
		\label{PartialDerivativex}
	\end{eqnarray}
	and
	\begin{eqnarray}
		\nabla_{y} F\left(x	, y , {\bf z}\right) & = & 2\sum_{j = 1}^{m} 
		\left[S_{j}\left(x\right) \odot 
		\left(\cdot\right)\right]^{\ast}\left(S_{j}\left(x\right) \odot y 
		- z_{j}\right) = 2\sum_{j = 1}^{m} \overline{S_{j}\left(x\right)} \odot 
		\left(S_{j}\left(x\right) \odot y - z_{j}\right) \nonumber \\
		& = & 2\sum_{j = 1}^{m} \left[S_{j}\left(\overline{x} \odot x\right) \odot y - 
		S_{j}\left(\overline{x}\right) \odot z_{j} \right], 
		\label{PartialDerivativey}
	\end{eqnarray}
	where $S_{j}^{\ast}$, $j = 1 , 2 , \ldots , m$, denotes the adjoint transformation of 
	$S_{j}$ and $\overline{z}$ denote the element-wise complex conjugate of $z$. We remind 
	the reader that $S_{j}^{\ast} = S_{j}^{-1}$. We also used the following two facts:
	\begin{equation*}
		\left[S_{j}\left(\cdot\right) \odot y\right]^{\ast} = S_{j}^{\ast}\left(\overline{y} 
		\odot \left(\cdot\right)\right) \quad \text{and} \quad \left[S_{j}\left(x\right) 
		\odot \left(\cdot\right)\right]^{\ast} =  \overline{S_{j}\left(x\right)} \odot 
		\left(\cdot\right).
	\end{equation*}
	Using \eqref{PartialDerivativex} we obtain, for any $x , x' \in \comp^{n}$ that
	\begin{align}
		\nabla_{x} F\left(x	, y , \bz\right) - \nabla_{x} F\left(x' , y , \bz\right) & =  
		2\sum_{j = 1}^{m} \left[S_{j}^{\ast}\left(\overline{y} \odot y\right) \odot 
		x - S_{j}^{\ast}\left(\overline{y} \odot y\right) \odot x' \right] \nonumber \\
		& = 2\sum_{j = 1}^{m} S_{j}^{\ast}\left(\overline{y} \odot y\right) \odot 
		\left(x - x'\right) \nonumber \\
		& = 2\left(\sum_{j = 1}^{m} S_{j}^{\ast}\left(\overline{y} \odot y\right)\right) 
		\odot \left(x - x'\right), \label{PartialDerivativeDifferencex}
	\end{align}
	which means that
	\begin{equation*}
		\norm{\nabla_{x} F\left(x , y , \bz\right) - \nabla_{x} F\left(x' , y , 
		\bz\right)} \leq 2\norm{\sum_{j = 1}^{m} S_{j}^{\ast}\left(\overline{y} \odot 
		y\right)}_{\infty} \cdot \norm{x - x'},
	\end{equation*}
	the last inequality follows from the following fact
	\begin{equation*}
		\norm{u \odot v}^{2} = \sum_{j = 1}^{m} \left(u_{j}v_{j}\right)^{2} \leq \sum_{j = 
		1}^{m} \left(\left|u_{j*}\right| \cdot \left|v_{j}\right|\right)^{2} = u_{j*}^{2}
		\sum_{j = 1}^{m} v_{j}^{2} = \norm{u}_{\infty}^{2}\norm{v}^{2},
	\end{equation*}
	where $j*$ is the index of the largest entry in absolute value of $u$. This proves that 
	\begin{equation*}
		L_{x}\left(y , z\right) \leq 2\norm{\sum_{j = 1}^{m} S_{j}^{\ast}\left(\overline{y} 
		\odot y\right)}_{\infty}.
	\end{equation*}
  	On the other hand, choosing $x' = 0$ and $x = e_{i}$ (which is the $i$-th standard unit 
  	vector) and using \eqref{PartialDerivativeDifferencex} shows that
	\begin{equation*}
		\nabla_{x} F\left(x	, y , \bz\right) - \nabla_{x} F\left(x' , y , \bz\right) = 
		2\left(\sum_{j = 1}^{m} S_{j}^{\ast}\left(\overline{y} \odot y\right)\right) \odot 
		e_{i} = 2\left(\sum_{j = 1}^{m} S_{j}^{\ast}\left(\overline{y} \odot 
		y\right)\right)_{i},
	\end{equation*}
	where $(v)_{i}$ denotes the $i$-th component of the vector $v$. This means that we take 
	$i = j*$, the largest entry in absolute value of $\sum_{j = 1}^{m} 
	S_{j}^{\ast}\left(\overline{y} \odot y\right)$, then we obtain that
	\begin{equation*}
		\nabla_{x} F\left(x	, y , \bz\right) - \nabla_{x} F\left(x' , y , \bz\right) = 
		2\norm{\sum_{j = 1}^{m} S_{j}^{\ast}\left(\overline{y} 
		\odot y\right)}_{\infty}.
	\end{equation*}
	This shows that
	\begin{equation*}
		L_{x}\left(y , z\right) = 2\norm{\sum_{j = 1}^{m} S_{j}^{\ast}\left(\overline{y} 
		\odot y\right)}_{\infty}.
	\end{equation*}
	Similar arguments shows that
	\begin{equation*}
		L_{y}\left(x , z\right) = 2\norm{\sum_{j = 1}^{m} S_{j}\left(\overline{x} \odot 
		x\right)}_{\infty}.
	\end{equation*}	
	As a direct consequence of \eqref{PartialDerivativeDifferencex} we achieve
	\begin{equation*}
		L_{x_i}\left(y , z\right) = 2\left[\sum_{j = 1}^{m} S_{j}^{\ast}\left(\overline{y} 
		\odot y\right)\right]_i
	\end{equation*}
	and by similar argument
	\begin{equation*}
		L_{y_i}\left(x , z\right) = 2\left[{\sum_{j = 1}^{m} S_{j}\left(\overline{x} \odot 
		x\right)}\right]_i
	\end{equation*}
which are \eqref{e:nablaxi F ptychography modulus} and \eqref{e:nablayi F ptychography modulus} respectively.
\hfill$\Box$

    	\begin{remark}[Block partial Lipschitz constants]
     For more general variable blocks of the form considered in Section~\ref{SSec:Acceleration}, the corresponding formula for $L_{x_i}(y,z)$ (resp. $L_{y_i}(x,z)$) are given by taking twice largest entry in the block $X_i$ (resp. $Y_i$) from the summation. That is,
      $$L_{x_i}\left(y , z\right) = 2\left\|\left.\left(\sum_{j = 1}^{m} S_{j}^{\ast}\left(\overline{y} 
		\odot y\right)\right)\right|_{X_i}\right\|_\infty,\quad L_{y_i}\left(x , z\right) = 2\left\|\left.\left(\sum_{j = 1}^{m} S_{j}\left(\overline{x} \odot 
		x\right)\right)\right|_{Y_i}\right\|_\infty,$$
	where $|_{X_i}$ (res. $|_{Y_i}$) denotes the restriction to the block $X_i$ (resp. $Y_i$).
	
	From these formulae one immediately recovers \eqref{e:nablaxi F ptychography modulus} and \eqref{e:nablayi F ptychography modulus} as special cases.
	\end{remark}
	
\section*{Acknowledgments}
	RH and DRL were supported by DFG grant SFB755TPC2. SDS was supported by an Alexander von 
	Humboldt Postdoctoral Fellowship. MKT was supported by an Australian Postgraduate Award.
We would like to thank Robin Wilke and Tim Salditt of the Institute for X-ray Physics at the 
University of G\"ottingen for generously making their data available to us.  


\end{document}